\pgfplotsset{
        compat=1.9,
        xmin=0,
        /tikz/font=\footnotesize,
    }
\pgfplotsset{compat=1.8}
    \pgfmathfloatmultiply{\pgfmathresult}{#2}%
    \pgfmathfloatifapproxequalrel{\pgfmathresult}{#2}{\def\pgfmathresult{5}}{}%
\colorlet{fblue}{blue!30!white}
\colorlet{dbrown}{brown!60!black}
\colorlet{fred}{red!30!white}
\colorlet{fbrown}{brown!30!white}
\definecolor{mygreen}{rgb}{0.42, 0.56, 0.14}
\colorlet{lblue}{blue!80!white}
\colorlet{lbrown}{brown!90!black}
\colorlet{lred}{red!80!white}
\definecolor{deepg}{HTML}{5b6e5b}
\newcommand{\calU}{\mathcal{U}}
\newcommand{\calP}{\mathcal{P}}
\newcommand{\phiprob}{\Phi^{-1}(\rho)}
\newtheorem{prop}{Proposition}
\newtheorem{thm}{Theorem}
\newtheorem{exam}{Example}
\newlist{casesp}{enumerate}{1} 
\setlist[casesp]{align=left, 
                 listparindent=\parindent, 
                 parsep=\parskip, 
                 font=\normalfont, 
                 leftmargin=1em, 
                 labelindent=0em,
                 labelwidth=0pt, 
                 itemsep=0.5em,
                 itemindent=1em,labelsep=0.5em, 
                 partopsep=1em, 
				label=$\bullet$ Case~\arabic*.,ref=\arabic*
                 }
\begin{document}
\begin{frontmatter}
\title{Non-convex relaxation and 1/2-approximation algorithm for the chance-constrained binary knapsack problem}

\author[label1]{Junyoung Kim}
\ead{xhxhzld@snu.ac.kr}

\author[label1]{Kyungsik Lee\corref{cor1}}
\ead{optima@snu.ac.kr}
		
\cortext[cor1]{Corresponding author}
\address[label1]{Department of Industrial Engineering, Seoul National University, 1 Gwanak-ro, Gwanak-gu, Seoul 08826, Republic of Korea}

\begin{abstract}
We consider the chance-constrained binary knapsack problem (CKP), where the item weights are independent and normally distributed. We introduce a continuous relaxation for the CKP, represented as a non-convex optimization problem, which we call the non-convex relaxation. A comparative study shows that the non-convex relaxation provides an upper bound for the CKP, at least as tight as those obtained from other continuous relaxations for the CKP. Furthermore, the quality of the obtained upper bound is guaranteed to be at most twice the optimal objective value of the CKP. Despite its non-convex nature, we show that the non-convex relaxation can be solved in polynomial time. Subsequently, we proposed a polynomial-time 1/2-approximation algorithm for the CKP based on this relaxation, providing a lower bound for the CKP. Computational test results demonstrate that the non-convex relaxation and the proposed approximation algorithm yields tight lower and upper bounds for the CKP within a short computation time, ensuring the quality of the obtained bounds. 
\end{abstract}

\begin{keyword}
Knapsack problem \sep Chance-constrained programming \sep Relaxation \sep Non-convex optimization \sep Approximation algorithm
\end{keyword}
\end{frontmatter}

\section{Introduction}\label{chap4:sec:intro}


The binary knapsack problem (BKP) is a widely studied combinatorial optimization problem. In this problem, we are given a set of items $N=\{1,...,n\}$, where each item $j\in N$ has a profit $c_j>0$ and a weight $a_j>0$. The objective is to select a subset of items that can be packed into a knapsack of a given capacity $b>0$, aiming to maximize the total profit of the selected items. The BKP holds both theoretical and practical significance in integer programming. Its combinatorial structure, defined with general coefficients, provides insights for analyzing complexity status and developing solution approaches for complicated integer programs. Specifically, the BKP frequently arises as a sub-problem of several large-scale integer programs, such as column generation sub-problems for generalized assignment problems \citep{savelsbergh1997branch} or cutting-stock problems \citep{gilmore1961linear}. Developing an efficient solution approach for the BKP is essential for successfully solving such large-scale integer programs. While the BKP is known to be NP-hard, it can be solved in pseudo-polynomial time using a dynamic programming algorithm. In addition to exact algorithms such as the dynamic programming algorithm, efficient approximation or heuristic algorithms have been developed over the last few decades. Comprehensive reviews of these studies can be found in \cite{martello1990knapsack} and \cite{kellerer2004}.




Given the importance of the BKP, recent research has focused on its variants that have emerged in challenging real-world optimization problems. Data uncertainty is a significant concern in practical optimization problems, impacting the quality and feasibility of solutions \citep{ben2009robust}. Various modeling frameworks have been developed to address this issue and incorporate data uncertainty into optimization models. Two representative frameworks are robust optimization \citep{ben2009robust} and stochastic optimization approaches \citep{birge2011introduction}. The former assumes that the data is drawn from a given uncertainty set and aims to find the best solution feasible for all realizations of the data. The latter approach utilizes stochastic information about the data and typically optimizes the expectation of the objective value. The chance-constrained programming approach introduced by \cite{charnes1958cost} is a type of stochastic optimization approach. Its primary goal is to obtain the best solution satisfying the constraints within a specified probabilistic threshold. Uncertain variants of the BKP within these frameworks, such as the robust knapsack problem \citep{yu1996max, iida1999note, bertsimas2004price} and the stochastic knapsack problem \citep{kosuch2010upper, merzifonluouglu2012static}, have also been extensively explored along with their solution approaches.

We consider the chance-constrained binary knapsack problem (CKP), where each item $j\in N$ has an uncertain item weight $\tilde{a}_j$. The objective of the CKP is to find a subset of items maximizing the total profit, while the probability of satisfying the capacity constraint is greater than or equal to a given threshold $\rho$. Here, we assume that $\rho\geq 0.5$. The CKP can be formulated as follows: 
\begin{align}
    \max\quad &\sum_{j \in N}c_j x_j \nonumber \\
    \text{s.t} \quad & \mathbb{P}\left\{\sum_{j \in N}\tilde{a}_j x_j \leq b\right\}\geq \rho \label{chap4:chancon} \\
    &  x \in \{0,1\}^n \nonumber
\end{align}
The feasible solution set of the CKP changes depending on the distribution of $\tilde{a}_j$'s due to the chance constraint \eqref{chap4:chancon}. This study assumes that the item weights are mutually independent and normally distributed with mean $a_j$ and standard deviation $\sigma$, i.e., $\tilde{a}_j \sim \mathcal{N}(a_j, \sigma_j^2)$. In this case, the chance constraint \eqref{chap4:chancon} can be reformulated as the following second-order cone constraint:
\begin{equation}
    \sum_{j \in N}a_j x_j + \phiprob\sqrt{\sum_{j \in N}\sigma_j^2 x_j^2} \leq b, \label{chancon_convex},
\end{equation}
where $\Phi^{-1}(\cdot)$ denotes the inverse of the cumulative density function for the standard normal distribution. Consequently, the CKP can be formulated as the following second-order cone program, which we refer to as the ISP.
\begin{equation*}
    \text{ISP: }\max\left\{\sum_{j \in N}c_j x_j:\eqref{chancon_convex}, x\in \{0,1\}^n\right\}
\end{equation*}
Without loss of generality, we assume that $\sum_{j \in N}a_j + \phiprob \sqrt{\sum_{j\in N}\sigma_j^{2}}> b$ and $a_j+\phiprob\sigma\leq b$ for each $j\in N$. 

The CKP may seem artificial due to the normality assumption on the probability distribution. However, the CKP can represent other BKP variants under different assumptions on the uncertain item weights. For example, when only the uncertainty set is known as an ellipsoidal set, the corresponding robust knapsack problem can be formulated as a form of the ISP \citep{ben1998robust}. In addition, when the uncertain item weights are known only through their first and second moments, the chance constraint \eqref{chap4:chancon} under the worse-case probability distribution can be represented as the constraint \eqref{chancon_convex} where $\phiprob=\sqrt{\rho/(1-\rho)}$ \citep{ghaoui2003worst,bertsimas2005optimal}. Therefore, studies on the CKP can be applied to these BKP variants.



Solution approaches for the CKP have been studied in the literature. The CKP is a generalization of the BKP, where the latter is a special case with $\sigma_j=0$ for each $j\in N$. Therefore, the CKP is also NP-hard. While an optimal solution for the CKP with a few items can be obtained by solving the ISP using the branch-and-bound algorithm provided by commercial optimization software, solving large-sized CKPs remains challenging. Scalable exact solution approaches, such as the dynamic programming algorithm for the BKP, have not yet been proposed. However, several heuristic algorithms have been developed for the CKP. 

\cite{shabtai2018relaxed} proposed a FPTAS (fully polynomial-time approximation scheme) for the CKP in a relaxed sense. For a given $\epsilon>0$, this scheme provides a feasible solution for the relaxed CKP where $b$ is replaced by $(1+\epsilon)b$, with the corresponding objective value being greater than or equal to $(1-5\epsilon)$ times the optimal objective value of the original CKP. 
\cite{han2016robust} considered a relaxation of the CKP, represented as a robust knapsack problem under a polyhedral uncertainty set. The authors proposed a pseudo-polynomial time algorithm for solving this problem and showed that the optimal solution satisfies the capacity constraint of the CKP with a probability of at least $\rho-\epsilon$ for a given $\epsilon>0$. 
We note that the solution approaches proposed by \cite{shabtai2018relaxed} and \cite{han2016robust} may yield an infeasible solution for the CKP. However, they can be used as a heuristic approach by applying them to the CKP instances modified appropriately.
\cite{joung2020robust} proposed another robust optimization-based heuristic for the CKP. The authors approximated the CKP as a robust knapsack problem under a box uncertainty set with a cardinality constraint. The authors then proposed a pseudo-polynomial time heuristic algorithm for the CKP, which solves this robust knapsack problem iteratively while adjusting the value of the cardinality. 

These solution approaches, however, may still involve a considerable amount of computation due to their high computational complexity. Furthermore, the quality of the obtained solutions is not guaranteed. A PTAS for the CKP was proposed by \cite{goyal2010ptas} using the parametric linear programming reformulation, which provides a $(1-3\epsilon)$-approximate solution for a given $\epsilon>0$. However, the PTAS also has a prohibitively large running time, which is undesirable in practice.

On the other hand, various formulations for the CKP have been introduced in the literature. \cite{atamturk2008polymatroids} formulated the CKP as an integer linear program using a submodular polytope \citep{cornuelsols2014IP}. \cite{goyal2010ptas} introduced an integer non-convex program for the CKP by replacing $x_j^2$ with $x_j$ in the ISP, based on the binarity of the decision variables. These formulations, including the ISP, yield continuous relaxations for the CKP, and their optimal solutions provide upper bounds for the problem. Typically, bounds obtained from relaxations of integer programming problems can be used to evaluate the quality of solutions obtained from heuristic algorithms. They also play a crucial role in developing exact algorithms based on enumeration schemes, such as the branch-and-bound algorithm. Moreover, approximate solutions for integer programming problems can often be obtained from optimal solutions of their continuous relaxations. 

However, the CKP's continuous relaxations have been scarcely studied; only two studies have addressed this issue. \cite{goyal2010ptas} showed that the continuous relaxation of the ISP can have a large integrality gap. \cite{atamturk2009submodular} examined the special case of the continuous relaxation of the integer non-convex program introduced by \cite{goyal2010ptas}, where $c_j\leq n$ for each $j \in N$. The authors proved that this case can be solved in polynomial time. To the best of our knowledge, no studies have yet theoretically compared the continuous relaxations of the CKP, and proposed solution approaches that utilize them for the CKP.

In this paper, we consider the continuous relaxation of the integer non-convex program for the CKP introduced by \cite{goyal2010ptas}. This relaxation, denoted as the \textit{non-convex relaxation}, is defined as the following non-convex optimization problem:
\begin{equation*}
    \max_{x\in [0,1]^n}\left\{\sum_{j \in N}c_jx_j :g(x)\leq b\right\},
\end{equation*}
where $g(x)= \sum_{j \in N}a_j x_j + \phiprob\sqrt{\sum_{j \in N}\sigma_j^2 x_j}$. We conduct a comparative study on the non-convex relaxation with the other continuous relaxations, along with an analysis of the properties of its optimal solution. Subsequently, an efficient algorithm is proposed for solving this relaxation. We also show that an approximate solution for the CKP can be obtained from the non-convex relaxation. Our contributions can be summarized as follows.

\begin{itemize}
    \item We show that the non-convex relaxation provides an upper bound for the CKP, at least as strong as those provided by the other continuous relaxations presented in the literature. Furthermore, we prove that the upper bound is less than or equal to twice the optimal objective value of the CKP.
    \item Despite the non-convexity, we show that the non-convex relaxation can be solved in polynomial time. Subsequently, we propose a polynomial-time 1/2-approximation algorithm for the CKP based on the non-convex relaxation. 
    \item Through extensive computational tests, we demonstrate that the non-convex relaxation and the proposed approximation algorithm provide tight upper and lower bounds for the CKP within a short computation time.
\end{itemize}

The remainder of this paper is organized as follows. In Section \ref{chap4:sec:compare}, we introduce the other continuous relaxations for the CKP presented in the literature and compare them with the non-convex relaxation. An efficient algorithm for solving the non-convex relaxation is devised in Section \ref{chap4:sec:algorithm}. Section \ref{chap4:sec:approx} presents how an approximate solution for the CKP can be obtained from the non-convex relaxation. Computational test results are reported in Section \ref{chap4:sec:exp}. Finally, the concluding remarks are given in Section \ref{chap4:sec:con}.

\section{Comparison of the continuous relaxations for the CKP}\label{chap4:sec:compare}

We start this section by formally defining the other continuous relaxations derived from the CKP formulations introduced in Section \ref{chap4:sec:intro}. By relaxing the integrality condition in the ISP, the following relaxation can be obtained:
\begin{equation*}
\max\left\{\sum_{j \in N}c_j x_j: x \in \calP_C\right\},    
\end{equation*}
where $\calP_C = \left\{x\in[0,1]^{n}: \eqref{chancon_convex}\right\}$.
We refer to this relaxation as the \textit{convex relaxation} for the CKP because the feasible solution set $\calP_C$ is convex. The objective value of this relaxation, denoted as $z_C$, can be efficiently obtained since this relaxation corresponds to a second-order cone programming problem.


Let us define a set function $ F(S)= \sum_{j \in S}a_j + \phiprob\sqrt{\sum_{j\in S}\sigma^{2}_j}$. Another formulation for the CKP proposed by \cite{atamturk2008polymatroids} is based on the submodularity of $F(S)$ and defined as the following integer linear program:
\begin{align}
    \max \quad &\sum_{j \in N}c_j x_j \nonumber \\
    \text{s.t}\quad &\sum_{j\in N}\pi_j x_j \leq b,\; \forall \pi \in \text{ext}(\Pi) \label{modconst}\\
    &x\in \{0,1\}^n \nonumber,
\end{align}
where $\Pi =\left\{\pi \in \mathbb{R}^{n}: \sum_{j \in S}\pi_j \leq F(S),\; \forall S\subseteq N\right\}$, and $\text{ext}(\Pi)$ is the set of extreme points of $\Pi$. The continuous relaxation of this formulation can be described as 
\begin{equation*}
\max\left\{\sum_{j \in N}c_j x_j: x \in \calP_P\right\},    
\end{equation*}
where $\calP_P=\left\{x \in [0,1]^{n}: \eqref{modconst}\right\}$. $\calP_P$ is a polyhedron since $|\text{ext}(\Pi)|$ is finite. Hence, we refer to this relaxation as the \textit{polyhedral relaxation} for the CKP and denote its optimal objective value as $z_P$. Even though this relaxation has exponentially many constraints, $z_P$ can be obtained in polynomial time using the cutting-plane algorithm \citep{edmond1971}. 

In the following discussion, we compare the optimal objective values of the convex, polyhedral, and non-convex relaxations, which are upper bounds for the CKP. Additionally, we examine the integrality gaps of these relaxations by comparing their optimal objective values with the optimal objective value of the CKP.

\subsection{Comparison of the optimal objective values of the continuous relaxations}




In this section, we show that an upper bound the CKP provided by the non-convex relaxation is at least as strong as those obtained from the other continuous relaxations. We first introduce two properties of an optimal solution for the non-convex relaxation. Let $\calP_{NC}$ be the feasible solution set of the non-convex relaxation.

\begin{prop}\label{chap4:prop:pack}
Let $x^{\star}\in \calP_{NC}$ be an optimal solution for the non-convex relaxation. Then $g(x^{\star})=b$.
\end{prop}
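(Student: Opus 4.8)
The plan is to argue by contradiction, exploiting the monotonicity of $g$ in each coordinate together with its continuity. Suppose $x^{\star}$ is optimal but $g(x^{\star})<b$, so that the capacity constraint is slack. First I would observe that $x^{\star}\neq \mathbf{1}$: by the standing assumption $\sum_{j\in N}a_j+\phiprob\sqrt{\sum_{j\in N}\sigma_j^{2}}>b$ we have $g(\mathbf{1})>b$, so if $x^{\star}$ were the all-ones vector the constraint would be violated rather than slack. Hence there exists an index $k\in N$ with $x^{\star}_k<1$.

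Next I would perturb $x^{\star}$ by increasing its $k$-th coordinate. Since $a_j>0$, $\sigma_j^2\geq 0$, and $\phiprob\geq 0$ (as $\rho\geq 0.5$), the function $g$ is nondecreasing in each argument and, because $a_k>0$, strictly increasing in $x_k$; moreover $g$ is continuous on $[0,1]^n$. The objective $\sum_{j\in N}c_jx_j$ is likewise strictly increasing in $x_k$ because $c_k>0$. The key step is then a continuity argument: since $g(x^{\star})<b$ strictly and $x^{\star}_k<1$ strictly, there is a sufficiently small $\varepsilon>0$ for which both $x^{\star}_k+\varepsilon\leq 1$ and $g(x^{\star}+\varepsilon e_k)\leq b$ continue to hold, where $e_k$ denotes the $k$-th unit vector. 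Thus $x^{\star}+\varepsilon e_k\in\calP_{NC}$.

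Finally, this perturbed point is feasible yet has objective value $\sum_{j\in N}c_jx^{\star}_j+\varepsilon c_k$, which strictly exceeds that of $x^{\star}$, contradicting optimality. Therefore $g(x^{\star})=b$.

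I expect no substantial obstacle here: the argument is essentially that a positive-profit item can always absorb any remaining capacity slack. The only points requiring care are ruling out the boundary case $x^{\star}=\mathbf{1}$ (handled via the standing infeasibility assumption) and making the perturbation rigorous at coordinates where $\sigma_k$ might vanish, but even there $a_k>0$ guarantees both strict monotonicity of the objective and feasibility for small $\varepsilon$.
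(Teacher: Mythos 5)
Your proof is correct and follows essentially the same route as the paper's: both use the standing assumption $\sum_{j\in N}a_j+\Phi^{-1}(\rho)\sqrt{\sum_{j\in N}\sigma_j^2}>b$ to locate a coordinate $k$ with $x^{\star}_k<1$, then increase $x^{\star}_k$ to contradict optimality when the constraint is slack. Your version merely spells out the continuity and monotonicity details that the paper leaves implicit.
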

\begin{proof}
    There exists $k\in N$ such that $x^{\star}_k<1$ by the assumption that $\sum_{j \in N}a_j + \phiprob \sqrt{\sum_{j\in N}\sigma_j^{2}}> b$. Now, suppose that $g(x^{\star})<b$. In this case, an improved solution can be obtained by increasing the values of $x^{\star}_k$ since $c_k>0$. However, this result contradicts the optimality of $x^{\star}$. Therefore, the result follows. \qed
\end{proof}

\begin{prop}\label{chap4:prop:extreme}
There exists an optimal solution for the non-convex relaxation in which at most one variable has a fractional value.
\end{prop}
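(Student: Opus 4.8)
The plan is to start from an arbitrary optimal solution $x^{\star}$ (which exists because $\calP_{NC}$ is compact and the objective is continuous) and repeatedly decrease the number of fractional components, without changing the objective value, until at most one fractional component remains. By Proposition~\ref{chap4:prop:pack} we may assume $g(x^{\star})=b$. If $x^{\star}$ has two or more fractional components, I would single out two of them, say $x^{\star}_i,x^{\star}_k\in(0,1)$, freeze every other component, and show that one of the two can be driven to a bound while preserving feasibility and optimality; iterating this exchange then yields the claim by induction on the number of fractional components.

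For the two free components, write $s(x)=\sum_{j\in N}\sigma_j^2 x_j$ and note that, along the active surface $g(x)=b$, the equation $a_i x_i+a_k x_k+\phiprob\sqrt{s(x)}=\text{const}$ determines $x_k$ as a function $x_k=v(x_i)$ on the segment where both stay in $[0,1]$; since $g$ is strictly increasing in each of $x_i,x_k$, this $v$ is well defined and decreasing. The objective then reduces to the single-variable function $h(x_i)=c_i x_i+c_k v(x_i)$, and it suffices to prove that $h$ attains its maximum over the feasible segment at an \emph{endpoint}, because at an endpoint either $x_i$ or $x_k$ equals $0$ or $1$, i.e.\ becomes integral. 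As $x^{\star}$ is optimal and lies in the relative interior of the segment, the endpoint value would then be both $\ge$ and $\le$ the optimum, hence equal to it, producing a new optimal solution with strictly fewer fractional components.

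The heart of the argument is to control the sign of $h'$. Implicit differentiation of $g=b$ gives $v'=-\partial_i g/\partial_k g$, where $\partial_j g=a_j+\beta\sigma_j^2$ and $\beta=\phiprob/(2\sqrt{s})>0$, so that the sign of $h'(x_i)$ equals the sign of $A+\beta B$ with $A=c_i a_k-c_k a_i$ and $B=c_i\sigma_k^2-c_k\sigma_i^2$. Differentiating $s$ along the curve shows that $\mathrm{d}s/\mathrm{d}x_i$ has the constant sign of $D=\sigma_i^2 a_k-\sigma_k^2 a_i$, so $\beta$ is monotone in $x_i$; consequently $A+\beta B$ is monotone in $x_i$ and changes sign at most once. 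An interior maximum would require $A+\beta B$ to pass from positive to negative, which, tracking the direction of the monotonicity of $\beta$, forces either $(A<0,B>0,D>0)$ or $(A>0,B<0,D<0)$. The hard part is to exclude both configurations, and this is exactly where the algebraic identity $c_k D=\sigma_k^2 A-a_k B$ does the work: in each configuration the right-hand side has the sign opposite to that of $c_k D$, a contradiction. Hence $h$ can only have an interior minimum or be monotone, so its maximum is attained at an endpoint, which completes the reduction.

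Finally, I would dispose of the degenerate situations in which the square root is not differentiable, namely $s(x)=0$ along the segment (all variance-bearing variables vanish) or $\sigma_i=\sigma_k=0$; in these cases the constraint is affine in $(x_i,x_k)$ and $h$ is itself affine, so its maximum is trivially at an endpoint. I expect the sign bookkeeping for $A+\beta B$ together with the identity $c_k D=\sigma_k^2 A-a_k B$ to be the only genuinely delicate step; the remainder is a routine exchange-and-induction argument.
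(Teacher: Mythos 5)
Your proof is correct, and its architecture coincides with the paper's: pass to an optimal solution with $g(x)=b$ (Proposition \ref{chap4:prop:pack}), isolate two fractional coordinates, eliminate one through the implicit function defined by the equality constraint (your $v$, the paper's $h$), show that the resulting one-variable objective attains its maximum at an endpoint of the feasible segment, and induct on the number of fractional components. Where you genuinely diverge is in the justification of the endpoint claim. The paper notes that $g$ is concave, so $\{(x_i,x_k):g\ge b\}$ is convex and its lower boundary $v$ is a convex function of $x_i$; the objective $c_ix_i+c_kv(x_i)$ is then convex in one variable and is maximized at an endpoint, with no computation. You instead verify the same conclusion by hand: you reduce the sign of the derivative to that of $A+\beta B$, observe that $\beta$ (hence $A+\beta B$) is monotone along the curve because $\mathrm{d}s/\mathrm{d}x_i$ has the constant sign of $D$, and exclude the profitable sign change via the identity $c_kD=\sigma_k^2A-a_kB$, which indeed holds and rules out both configurations $(A<0,B>0,D>0)$ and $(A>0,B<0,D<0)$ since $a_k,c_k>0$. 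This is in effect a direct proof that the restricted objective is quasi-convex, which the paper's convexity argument delivers for free; your treatment of the degenerate cases is also adequate, since $s$ can vanish only at an endpoint of the segment unless it vanishes identically, in which case the constraint is affine in the two free variables. The trade-off is that your route is elementary and self-contained but long and fragile to sign bookkeeping, whereas the paper's one-line appeal to the concavity of $g$ exploits the same structural fact it reuses in Proposition \ref{chap4:prop:NCvsP}.
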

\begin{proof}
    Let $\hat{x} \in \calP_{NC}$ be an optimal solution of the non-convex relaxation, and suppose that $\hat{x}$ has more than two fractional variables. We show that another optimal solution can be constructed from $\hat{x}$, where one of the fractional variables becomes $0$ or $1$. Let $k$ and $l$ be the indices of some two fractional variables, where $0<\hat{x}_k<1$ and $0<\hat{x}_l <1$. Let us consider the following optimization problem with two variables.
    \begin{align}
        \max \quad &\sum_{j \in N\setminus\{k,l\}}c_j \hat{x}_j + c_k x_k + c_l x_l \nonumber \\
        \text{s.t} \quad &\sum_{j \in N\setminus \{k,l\}}a_j \hat{x}_j +a_k x_k + a_l x_l + \phiprob\sqrt{\sum_{j \in N\setminus\{k,l\}}\sigma^{2}_j\hat{x}_j + \sigma^{2}_k x_k + \sigma^{2}_l x_l} = b\label{chap4:eq:subconst}\\
        &0\leq x_k, x_l \leq 1\nonumber 
    \end{align}
    It is clear that $(\hat{x}_k, \hat{x}_l)$ is an optimal solution to the above problem, with feasibility ensured by Proposition \ref{chap4:prop:pack}. We first show that there exists another optimal solution for the problem \eqref{chap4:eq:subconst}, where one of the two variables has an integral value.
    
    From the first constraint of the problem \eqref{chap4:eq:subconst}, we can express $x_l$ as a function of $x_k\in [0,1]$, denoted as $h(x_k)$. It is clear that $h(x_k)$ is a decreasing function for $x_k\in [0,1]$. Furthermore, 
    it can be easily shown that $h(x_k)$ is convex. The problem \eqref{chap4:eq:subconst} can be rewritten with $h(x_k)$ as follows.
    \begin{equation}
        \max\{c_k x_k + c_l h(x_k): 0 \leq h(x_k)\leq 1,\; 0\leq x_k \leq 1\} \label{chap4:prob:sub}
    \end{equation}
    Here, we leave out the constant term in the objective function of the problem \eqref{chap4:eq:subconst}. The problem \eqref{chap4:prob:sub} is a single-variable maximization problem where the feasible solution set is convex due to the monotonicity of $h(x_k)$ for $x_k\in [0,1]$. The objective function is also convex due to the convexity of $h(x_k)$. Therefore, an optimal solution $x^{\star}_k$ for the problem \eqref{chap4:prob:sub} can be found on the boundary of $x_k$, that is, $x^{\star}_k\in \{0,1\}$ or $h(x^{\star}_k)\in \{0,1\}$. Let $x_l^{\star}=h(x^{\star}_k)$. Then, $(x^{\star}_k,x^{\star}_l)$ is another optimal solution for the problem \eqref{chap4:eq:subconst}, which has fewer fractional variables compared to $(\hat{x}_k, \hat{x}_l)$.

    Now, another feasible solution for the non-convex relaxation can be constructed from $\hat{x}$ by replacing $\hat{x}_k$ and $\hat{x}_l$ with $x^{\star}_k$ and $x^{\star}_l$, respectively. The obtained solution is also optimal for the non-convex relaxation, which has fewer fractional variables, by the definition of $(x^{\star}_k,x^{\star}_l)$. By applying this procedure repeatedly until one or no variable has a fractional value, we can construct an optimal solution for the non-convex relaxation, which satisfies the condition in the statement. Therefore, the result follows.\qed
\end{proof}

Based on these two properties, we compare the non-convex and polyhedral relaxations.
\begin{prop}\label{chap4:prop:NCvsP}
$ z_{NC} \leq z_P$
\end{prop}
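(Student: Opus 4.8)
The plan is to prove the stronger containment $\calP_{NC}\subseteq \calP_P$; since both relaxations maximize the same linear objective $\sum_{j\in N}c_j x_j$, this immediately yields $z_{NC}\leq z_P$. Concretely, I would take an optimal solution $x^{\star}$ of the non-convex relaxation and show it is feasible for the polyhedral relaxation, i.e. that $\sum_{j\in N}\pi_j x^{\star}_j\leq b$ holds for every $\pi\in\text{ext}(\Pi)$. By Proposition \ref{chap4:prop:extreme} I may assume $x^{\star}$ has at most one fractional component, so there exist $S\subseteq N$, an index $k\notin S$, and $t\in[0,1]$ with $x^{\star}_j=1$ for $j\in S$, $x^{\star}_k=t$, and $x^{\star}_j=0$ otherwise; the purely integral case is the special case $t\in\{0,1\}$.

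Fix $\pi\in\text{ext}(\Pi)\subseteq\Pi$. The defining inequalities of $\Pi$ give $\sum_{j\in S}\pi_j\leq F(S)$ and $\sum_{j\in S\cup\{k\}}\pi_j\leq F(S\cup\{k\})$. The key algebraic step is to rewrite the evaluation as a convex combination of these two sums, which is legitimate precisely because $t\in[0,1]$:
\begin{equation*}
\sum_{j\in N}\pi_j x^{\star}_j=\sum_{j\in S}\pi_j+t\pi_k=(1-t)\sum_{j\in S}\pi_j+t\sum_{j\in S\cup\{k\}}\pi_j\leq (1-t)F(S)+t\,F(S\cup\{k\}).
\end{equation*}
Thus it suffices to show $(1-t)F(S)+t\,F(S\cup\{k\})\leq b$.

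Writing $A=\sum_{j\in S}\sigma_j^2$ and $B=\sigma_k^2$, the linear parts of $F(S)$ and $F(S\cup\{k\})$ combine to $\sum_{j\in S}a_j+t a_k$, which is exactly the linear part of $g(x^{\star})$, so the two sides differ only in their square-root terms. It then remains to verify $(1-t)\sqrt{A}+t\sqrt{A+B}\leq\sqrt{A+tB}$. This is precisely the statement that the chord of the concave map $s\mapsto\sqrt{A+sB}$ over $[0,1]$ lies below its graph, so it follows from concavity of the square root. Multiplying through by $\phiprob\geq 0$ (which holds since $\rho\geq 0.5$) and adding back the matching linear parts gives $(1-t)F(S)+t\,F(S\cup\{k\})\leq g(x^{\star})$, and $g(x^{\star})=b$ by Proposition \ref{chap4:prop:pack}. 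Hence every extreme-point inequality is satisfied, $x^{\star}\in\calP_P$, and therefore $z_{NC}=\sum_{j\in N}c_j x^{\star}_j\leq z_P$.

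I expect the main obstacle to lie in the reduction rather than in the estimate: the inequalities defining $\Pi$ are indexed over all subsets of $N$, so the crucial observation is that only the two nested sets $S$ and $S\cup\{k\}$ are needed, and that the single-fractional-variable structure supplied by Proposition \ref{chap4:prop:extreme} is exactly what makes the convex-combination rewriting valid. Once this reduction is in place, the remaining inequality is a one-line consequence of concavity. A more structural alternative would be to recognize $\calP_P=\{x\in[0,1]^n:\hat f(x)\leq b\}$, where $\hat f$ is the Lov\'{a}sz extension of the submodular function $F$, and to prove $\hat f(x)\leq g(x)$ pointwise; this again reduces to the same concavity comparison but requires more polymatroid machinery, so I would favor the direct argument above.
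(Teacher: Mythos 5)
Your proof is correct, and it reaches the same crux as the paper's --- namely the inequality $(1-t)F(S)+t\,F(S\cup\{k\})\leq g(x^{\star})\leq b$ for a solution with at most one fractional coordinate, established via concavity --- but it gets there by a genuinely different route. The paper does not bound $\sum_j\pi_jx^{\star}_j$ for an arbitrary $\pi$; instead it sets up the separation problem $\eta=\max\{\sum_j\pi_jx^{\star}_j:\pi\in\Pi\}$ and invokes Edmonds' greedy characterization of its optimal solution, $\pi^{\star}_j=F(S_j)-F(S_{j-1})$ along the order that lists the variables at value $1$ first, which yields $\eta=(1-x^{\star}_t)F(S_{t-1})+x^{\star}_tF(S_t)$ exactly; it then applies concavity of $g$ itself to the convex combination $x^{\star}=(1-x^{\star}_t)x^{S_{t-1}}+x^{\star}_tx^{S_t}$. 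You avoid the polymatroid machinery entirely: for each $\pi\in\text{ext}(\Pi)\subseteq\Pi$ you use only the two defining inequalities for the nested sets $S$ and $S\cup\{k\}$, and the convex-combination rewriting $\sum_{j}\pi_jx^{\star}_j=(1-t)\sum_{j\in S}\pi_j+t\sum_{j\in S\cup\{k\}}\pi_j$ (valid since $t\in[0,1]$), then conclude with concavity of $s\mapsto\sqrt{A+sB}$, which is the restriction of the paper's concavity-of-$g$ argument to the relevant segment. Your version is more elementary and self-contained (it needs no optimality claim about the greedy $\pi^{\star}$, only membership in $\Pi$), while the paper's version computes the separation value exactly rather than merely bounding it and leans on a standard, citable result; both are complete, and your observation that this is really the pointwise comparison of the Lov\'{a}sz extension of $F$ with $g$ is the right structural reading of what is going on.
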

\begin{proof}
Let $x^{\star}\in \calP_{NC}$ be an optimal of the non-convex relaxation, which satisfies Proposition \ref{chap4:prop:extreme}. We show that $x^{\star} \in \calP_P$, hence, $ z_{NC} \leq z_P$. For some $T\subseteq N$ and $t\in N\setminus T$, $x^{\star}$ can be described as $x^{\star}_j=1$ for each $j\in T$, $0\leq x_t<1$, and $x_j=0$ for each $j \in N\setminus T\cup\{t\}$. For the sake of simplicity, let $T=\{1,...,t-1\}$. We can check whether $x^{\star}\in \calP_P$ by solving the following separation problem associated with $\calP_P$:
\begin{equation*}
    \eta=\max\left\{\sum_{j \in N}\pi_j x^{\star}_j:\sum_{j\in S}\pi_j \leq F(S), \; S\subseteq N\right\}.
\end{equation*}
If $x^{\star}\notin \calP_{P}$, then $\eta>b$ while $\eta\leq b$ implies that $x^{\star}\in \calP_{P}$. It is well-known that an optimal solution $\pi^\star$ of this separation problem is defined as $\pi^{\star}_j = F(S_j)-F(S_{j-1})$ for each $j\in N$ where $S_j =\{1,\ldots,j\}$ and $S_0=\emptyset$ \citep{edmond1971}. Then,
\begin{equation*}
    \eta = \sum_{j\in N}\pi^{\star}_j x^{\star}_j = F(S_{t-1}) + (F(S_{t})-F(S_{t-1}))x^{\star}_{t}.
\end{equation*}
Let $x^S\in \{0,1\}^n$ be the vector that represents a set $S\subseteq N$, where, for each $j \in N$, $x^S_j =1$ if $j\in S$, otherwise, $x^S_j =0$. Then, $F(S)=g(x^S)$ for all $S\subseteq N$, and $x^{\star} = (1-x^{\star}_{t})x^{S_{t-1}} + x^{\star}_{t}x^{S_{t}}$. Furthermore, $\eta$ can be rewritten as follows:
\begin{equation*}
    \eta=(1-x^{\star}_{t})g(x^{S_{t-1}}) + x^{\star}_{t}g(x^{S_t})
\end{equation*}
It can be easily shown that $g(x)$ is concave. Accordingly, we have the following result.
\begin{equation*}
    b \geq g(x^{\star})= g((1-x^{\star}_{t})x^{S_{t-1}} + x^{\star}_{t}x^{S_{t}})\geq (1-x^{\star}_{t})g(x^{S_{t-1}}) + x^{\star}_{t}g(x^{S_{t}}) = \eta
\end{equation*}
Therefore, $x^{\star}\in \calP_{P}$, and the result follows. \qed
\end{proof}

Next, we compare the convex relaxation and the polyhedral relaxation.
\begin{prop}\label{chap4:prop:PvsC}
$ z_{P} \leq z_C$
\end{prop}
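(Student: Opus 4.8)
The plan is to establish the stronger set inclusion $\calP_P \subseteq \calP_C$. Since both relaxations maximize the same linear objective $\sum_{j\in N}c_jx_j$ and both feasible regions lie in $[0,1]^n$, shrinking the feasible region can only lower the optimum, so the inclusion yields $z_P\le z_C$ immediately. Note that, unlike Proposition~\ref{chap4:prop:NCvsP}, this argument will not need Propositions~\ref{chap4:prop:pack} or~\ref{chap4:prop:extreme}. Fix an arbitrary $x\in\calP_P$; in particular $x\in[0,1]^n$, so $x\ge 0$, and it remains to verify the single second-order cone inequality $g_C(x):=\sum_{j\in N}a_jx_j+\phiprob\sqrt{\sum_{j\in N}\sigma_j^2x_j^2}\le b$ that defines $\calP_C$.

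The key idea is to exhibit one particular $\pi\in\Pi$ that certifies this. Taking the (sub)gradient of the convex function $g_C$ at $x$ suggests the choice
\begin{equation*}
\pi_j = a_j + \phiprob\,\frac{\sigma_j^2 x_j}{\sqrt{\sum_{k\in N}\sigma_k^2x_k^2}},\qquad j\in N,
\end{equation*}
in the nondegenerate case $\sum_{k\in N}\sigma_k^2x_k^2>0$; the degenerate case (e.g. $x=0$, or $\sigma_k=0$ whenever $x_k>0$) reduces to $\pi_j=a_j$ and is handled directly. By the positive homogeneity of the square-root term, this $\pi$ satisfies $\sum_{j\in N}\pi_j x_j = \sum_{j\in N}a_jx_j + \phiprob\sqrt{\sum_{j\in N}\sigma_j^2x_j^2}=g_C(x)$ exactly. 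Thus once I show $\pi\in\Pi$, feasibility of $x$ in $\calP_P$ will force $\sum_{j\in N}\pi_jx_j\le b$, i.e. $g_C(x)\le b$, completing the inclusion.

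The crux of the proof is therefore the membership $\pi\in\Pi$, which requires $\sum_{j\in S}\pi_j\le F(S)$ for every $S\subseteq N$. After cancelling the matching $a_j$ terms and using $\phiprob\ge0$ (here $\rho\ge0.5$ is used), this reduces to
\begin{equation*}
\sum_{j\in S}\sigma_j^2 x_j \;\le\; \Big(\sum_{j\in S}\sigma_j^2\Big)^{1/2}\Big(\sum_{k\in N}\sigma_k^2 x_k^2\Big)^{1/2},
\end{equation*}
which I expect to follow from Cauchy--Schwarz, writing $\sigma_j^2 x_j=\sigma_j\cdot(\sigma_j x_j)$ and then bounding $\sum_{j\in S}\sigma_j^2x_j^2\le\sum_{k\in N}\sigma_k^2x_k^2$ since all summands are nonnegative. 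One subtlety to address is that $\calP_P$ only imposes its inequalities for $\pi\in\operatorname{ext}(\Pi)$, whereas my $\pi$ need not be extreme: because $x\ge0$ and each coordinate is bounded above on $\Pi$ by $F(\{j\})$, the map $\pi\mapsto\sum_{j\in N}\pi_jx_j$ is bounded above on the pointed polyhedron $\Pi$ and attains its maximum at an extreme point, so the stated inequalities in fact force $\sum_{j\in N}\pi'_jx_j\le b$ for \emph{every} $\pi'\in\Pi$, including ours. The main obstacle is thus this Cauchy--Schwarz verification together with the reduction from $\operatorname{ext}(\Pi)$ to all of $\Pi$; everything else is bookkeeping. (Conceptually, $\max_{\pi\in\Pi}\sum_{j}\pi_jx_j$ is the Lov\'asz extension of the submodular $F$, which is its convex closure, and $g_C$ is a convex function agreeing with $F$ on $\{0,1\}^n$, so $g_C\le$ this maximum; but the explicit subgradient above makes the estimate self-contained.)
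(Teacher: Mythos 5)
Your proposal is correct, and it establishes the same inclusion $\calP_P\subseteq\calP_C$ that the paper proves, but by a more direct and self-contained route. The paper introduces the ellipsoidal uncertainty set $\calU=\{u\geq 0:\sum_{j}((u_j-a_j)/\sigma_j)^2\leq(\phiprob)^2\}$, identifies $\calP_C$ with the continuous relaxation of the robust counterpart $\{x\in[0,1]^n:\sum_j u_jx_j\leq b,\ \forall u\in\calU\}$, and then proves the full inclusion $\calU\subseteq\Pi$ via the computation $\max\{\sum_{j\in S}u_j:u\in\calU\}=F(S)$. Your certificate $\pi_j=a_j+\phiprob\,\sigma_j^2x_j/\sqrt{\sum_k\sigma_k^2x_k^2}$ is exactly the maximizer of $u\mapsto\sum_j u_jx_j$ over that ellipsoid, so in effect you verify membership in $\Pi$ only for the single point of $\calU$ that matters for the given $x$, and your Cauchy--Schwarz step is the same inequality that underlies the paper's ellipsoid maximization. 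What your version buys: it dispenses with the robust-optimization detour, it avoids the division by $\sigma_j$ implicit in the paper's definition of $\calU$ (problematic when some $\sigma_j=0$), and it makes explicit two points the paper passes over — the degenerate case and the justification that the constraints indexed by $\text{ext}(\Pi)$ already enforce $\sup_{\pi\in\Pi}\sum_j\pi_jx_j\leq b$ for $x\geq 0$ (the paper simply asserts that $\calP_P$ is unchanged if $\text{ext}(\Pi)$ is replaced by $\Pi$). What the paper's version buys is the conceptual link to the robust knapsack reformulation it has already set up in the introduction. Both arguments are sound and yield $z_P\leq z_C$.
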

\begin{proof}
    We define an ellipsoidal set $\calU$ as follows.
    \begin{equation*}
        \calU = \left\{u\in \mathbb{R}_+^{n}: \sum_{j \in N}\left( \frac{u_j - a_j}{\sigma_j}\right)^2 \leq (\phiprob)^2\right\}
    \end{equation*}
    Let us consider the following robust knapsack problem under the ellipsoidal uncertainty set $\calU$.
    \begin{align*}
        \max \quad &\sum_{j\in N}c_j x_j\\
        \text{s.t}\quad & \sum_{j \in N} u_j x_j \leq b,\; \forall u \in \calU\\
        & x \in \{0,1\}^n
    \end{align*}
    As mentioned in Section \ref{chap4:sec:intro}, this problem is equivalently reformulated as the ISP.
    Let $z_R$ and $\calP_R$ be the optimal objective value and the feasible solution set of the continuous relaxation of this problem, respectively. Then, $z_R=z_C$ and $\calP_R=\calP_C$. On the other hand, $\calP_P$ remains the same even if $\text{ext}(\Pi)$ is replaced with $\Pi$ in its definition. Based on this observation, we prove that $\calP_P\subseteq \calP_R=\calP_C$ by showing that $\calU\subseteq \Pi$. 

    For some $\hat{u}\in \calU$, we can see that 
    \begin{equation*}
        \sum_{j \in S}\hat{u}_j \leq \sum_{j \in S}a_j + \phiprob\sqrt{\sum_{j\in S}\sigma_j^2} = F(S),\;\forall S\subseteq N,
    \end{equation*}
    because
    \begin{equation*}
        \max\left\{\sum_{j \in S}u_j: u \in \calU\right\} = \sum_{j \in S}a_j + \phiprob\sqrt{\sum_{j\in S}\sigma_j^2}.
    \end{equation*}
    Therefore, $\hat{u}\in \Pi$. This result implies that $\calU\subseteq \Pi$ and $\calP_R \subseteq \calP_P$, that is, $z_P\leq z_R=z_C$. \qed
\end{proof}


Proposition \ref{chap4:prop:NCvsP} and \ref{chap4:prop:PvsC} directly lead to the following result.
\begin{thm}
    $z_{NC}\leq z_{P}\leq z_{C}$.
\end{thm}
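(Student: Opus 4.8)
The plan is simply to concatenate the two inequalities already established, since this theorem is a direct corollary of Proposition \ref{chap4:prop:NCvsP} and Proposition \ref{chap4:prop:PvsC} and carries no independent content. First I would invoke Proposition \ref{chap4:prop:NCvsP} to obtain $z_{NC} \leq z_P$. This is the step where the real work lives: it rests on Propositions \ref{chap4:prop:pack} and \ref{chap4:prop:extreme}, which guarantee an optimal solution $x^{\star}$ of the non-convex relaxation that is tight ($g(x^{\star})=b$) and has at most one fractional component, so that $x^{\star}$ is a convex combination of the two consecutive lattice points $x^{S_{t-1}}$ and $x^{S_t}$; concavity of $g$ together with the greedy separation oracle for $\calP_P$ then certifies $x^{\star}\in\calP_P$. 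Next I would invoke Proposition \ref{chap4:prop:PvsC} to obtain $z_P \leq z_C$, the step that reinterprets $\calP_C$ as the relaxed feasible set of an ellipsoidal robust knapsack and shows $\calU\subseteq\Pi$, whence the containment $\calP_P\subseteq\calP_C$ and the ordering of optimal values.

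Combining the two by transitivity yields $z_{NC}\leq z_P\leq z_C$ immediately. The only point worth checking explicitly is that all three relaxations maximize the same linear objective $\sum_{j\in N}c_j x_j$, so that a containment of feasible sets genuinely forces the corresponding ordering of optimal objective values; this common-objective observation is immediate from the definitions of the convex, polyhedral, and non-convex relaxations. Consequently there is no real obstacle here—the substance has been discharged in the preceding propositions, and the theorem follows as a one-line assembly of those two results.
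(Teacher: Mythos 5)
Your proposal is correct and matches the paper exactly: the theorem is stated there as an immediate consequence of Propositions \ref{chap4:prop:NCvsP} and \ref{chap4:prop:PvsC}, combined by transitivity just as you describe. Your accompanying summaries of those two propositions are accurate as well, so nothing further is needed.
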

Therefore, the upper bound for the CKP, provided by the non-convex relaxation is as strong as those provided by the other continuous relaxations. 

\subsection{Integrality gaps of the continuous relaxations}

We further analyze the tightness of the upper bounds provided by the continuous relaxations in terms of the integrality gaps. The integrality gap of each continuous relaxation is defined as $(z-z_{OPT})/z_{OPT}$ where $z$ is the optimal objective value of the continuous relaxation and $z_{OPT}$ is the optimal objective value of the CKP.

Let us consider the following example introduced in \cite{goyal2010ptas}.
\begin{exam}\label{exam1}
Consider the CKP instances where $c_j=\sigma_j=1$, and $a_j=1/\sqrt{n}$ for each $j\in N$ where $n\geq 56$. Additionally, let $b=3$ and $\phiprob=1.5$. Then, optimal solutions for these CKP instances should select 3 items. If 4 items are selected, the solution is not feasible for these instances because
\begin{equation*}
    \frac{4}{\sqrt{n}} + \phiprob\sqrt{4} = \frac{4}{\sqrt{n}} + 3 > 3.
\end{equation*}
Therefore, $z_{OPT}=3$.
\end{exam}
\cite{goyal2010ptas} showed that the integrality gap of the convex relaxation is $\Omega(\sqrt{n})$ through this example. Therefore, the convex relaxation may have a large integrality gap which increases as $n$ grows. 

We obtain the similar result for the polyhedral relaxation using Example \ref{exam1}.
\begin{prop}\label{chap4:prop:polygap}
    There exists CKP instances where the integrality gap of the polyhedral relaxation is $\Omega(\sqrt{n})$.
\end{prop}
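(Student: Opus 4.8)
The plan is to bound the integrality gap from below by exhibiting an explicit feasible point of the polyhedral relaxation whose objective value is $\Omega(\sqrt{n})$ on the instances of Example~\ref{exam1}. Note that the chain $z_{NC}\le z_P$ established in Proposition~\ref{chap4:prop:NCvsP} is of no help here, since the non-convex relaxation is proved to have a bounded gap; I must instead produce a large feasible solution for $\calP_P$ directly. Recall from Example~\ref{exam1} that $c_j=\sigma_j=1$, $a_j=1/\sqrt{n}$, $b=3$, $\phiprob=1.5$, and $z_{OPT}=3$.

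By the symmetry of the instance, the natural candidate is the uniform vector $\bar{x}$ with $\bar{x}_j=\bar{x}:=b/F(N)$ for every $j\in N$, where
\begin{equation*}
F(N)=\sum_{j\in N}a_j+\phiprob\sqrt{\sum_{j\in N}\sigma_j^2}=\sqrt{n}+\tfrac{3}{2}\sqrt{n}=\tfrac{5}{2}\sqrt{n}.
\end{equation*}
First I would check that $\bar{x}\in[0,1]^n$: since $\bar{x}=3/(\tfrac{5}{2}\sqrt{n})=\tfrac{6}{5\sqrt{n}}\le 1$ for all $n\ge 56$, this holds.

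Next I would verify $\bar{x}\in\calP_P$, which is the crux of the argument. Using the greedy characterization of the separation problem already invoked in the proof of Proposition~\ref{chap4:prop:NCvsP}, the most violated polymatroid inequality at $\bar{x}$ attains the value $\eta=\sum_{j\in N}\pi_j^{\star}\bar{x}_j$ with $\pi_j^{\star}=F(S_j)-F(S_{j-1})$ and $S_j=\{1,\dots,j\}$. Because all components of $\bar{x}$ are equal, this telescopes to $\eta=\bar{x}\sum_{j\in N}\bigl(F(S_j)-F(S_{j-1})\bigr)=\bar{x}\,F(N)=b$; equivalently, every $\pi\in\Pi$ satisfies $\sum_{j\in N}\pi_j\le F(N)$ (take $S=N$ in the definition of $\Pi$), so $\sum_{j\in N}\pi_j\bar{x}_j=\bar{x}\sum_{j\in N}\pi_j\le \bar{x}\,F(N)=b$ for every $\pi\in\mathrm{ext}(\Pi)$. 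Hence $\bar{x}\in\calP_P$.

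Finally, evaluating the objective at $\bar{x}$ yields
\begin{equation*}
z_P\ \ge\ \sum_{j\in N}c_j\bar{x}_j\ =\ n\bar{x}\ =\ \frac{nb}{F(N)}\ =\ \frac{3n}{\tfrac{5}{2}\sqrt{n}}\ =\ \frac{6}{5}\sqrt{n},
\end{equation*}
so the integrality gap satisfies $(z_P-z_{OPT})/z_{OPT}\ge(\tfrac{6}{5}\sqrt{n}-3)/3=\Omega(\sqrt{n})$, establishing the claim. The only delicate point is the feasibility check, and even that reduces to the observation that for a uniform vector the binding polymatroid inequality is the one indexed by $S=N$; all remaining steps are direct substitutions into the data of Example~\ref{exam1}.
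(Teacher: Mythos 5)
Your proposal is correct and follows essentially the same route as the paper: both exhibit a uniform fractional point (the paper uses $\hat{x}_j=1/\sqrt{n}$, you use $\bar{x}_j=b/F(N)=\tfrac{6}{5\sqrt{n}}$), certify membership in $\calP_P$ via the greedy/polymatroid separation argument, and read off an objective value of $\Theta(\sqrt{n})$ against $z_{OPT}=3$. Your observation that for a uniform vector the only relevant inequality is the one for $S=N$ is a slightly cleaner feasibility check, and your constant ($\tfrac{6}{5}\sqrt{n}$ versus $\sqrt{n}$) is marginally better, but the argument is the same in substance.
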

\begin{proof}
    Let us consider $\hat{x}\in \mathbb{R}_+^n$ defined as $\hat{x}_j=1/\sqrt{n}$ for each $j\in N$. We first show that $\hat{x}$ is a feasible solution of the polyhedral relaxation for the CKP instances in Example \ref{exam1}. The separation problem for $\hat{x}$ is defined as follows.
    \begin{equation*}
        \eta=\max\left\{\sum_{j \in N}\pi_j \hat{x}_j:\sum_{j\in S}\pi_j \leq F(S), \; S\subseteq N\right\}.
    \end{equation*}
    Let $S_j=\{1,\ldots,j\}$ for each $j\in N$ where $S_0=\emptyset$. An optimal solution $\pi^{\star}$ of this separation problem can be constructed as 
    \begin{equation*}
        \pi_j^{\star} = F(S_j)-F(S_{j-1})=\frac{1}{\sqrt{n}}+\phiprob(\sqrt{j}-\sqrt{j-1}),    
    \end{equation*}
    for each $j\in N$. Then, we can see that
    \begin{equation*}
        \eta = \sum_{j \in N}\frac{1}{\sqrt{n}}\cdot\pi_j^{\star} = \frac{1}{\sqrt{n}}\cdot F(n) = 1+\phiprob =2.5.
    \end{equation*}
    Therefore, $\hat{x}\in \calP_P$ since $\eta<b=3$. On the other hand, the objective value of the polyhedral relaxation, corresponding to $\hat{x}$, is $\sqrt{n}$. These results imply that $z_P\geq \sqrt{n}$ and the integrality gap of the polyhedral relaxation is $(\sqrt{n}-3)/3$ for these instances. Therefore, the result follows. \qed
\end{proof}

On the contrary to the convex and polyhedral relaxations, the following theorem shows that the integrality gap of the non-convex relaxation is bounded by a constant factor.

\begin{thm}\label{chap4:prop:intgap}
$z_{NC}\leq 2z_{OPT}$
\end{thm}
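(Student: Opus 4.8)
The plan is to exploit the structural result already established in Proposition \ref{chap4:prop:extreme}, which guarantees an optimal solution $x^{\star}\in\calP_{NC}$ having at most one fractional coordinate. First I would fix such a solution and write it as $x^{\star}=x^{T}+x^{\star}_t e_t$, where $T\subseteq N$ collects the indices with $x^{\star}_j=1$, the index $t$ carries the unique fractional value $x^{\star}_t\in[0,1)$, $x^T$ is the incidence vector of $T$, and $e_t$ is the $t$-th unit vector. This decomposition splits the objective as $z_{NC}=\sum_{j\in T}c_j + c_t x^{\star}_t$ into an integral contribution and a fractional one, and the whole argument then reduces to bounding each piece separately by $z_{OPT}$.

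The next step is to show that the integral part $T$ is by itself a feasible solution of the CKP. Since $a_j>0$, $\sigma_j\geq 0$, and $\phiprob\geq 0$ for $\rho\geq 0.5$, the function $g$ is nondecreasing in each coordinate on $[0,1]^n$. Because $x^{T}\leq x^{\star}$ coordinatewise, this monotonicity gives $g(x^{T})\leq g(x^{\star})=b$, where the equality is Proposition \ref{chap4:prop:pack}. Hence $x^{T}\in\{0,1\}^n$ is feasible for the ISP, so $\sum_{j\in T}c_j\leq z_{OPT}$.

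For the fractional term I would invoke the standing assumption that $a_j+\phiprob\sigma_j\leq b$ for every $j\in N$, which is precisely the statement that each singleton $\{j\}$ is a feasible CKP solution. In particular $g(e_t)=a_t+\phiprob\sigma_t\leq b$, so that $c_t\leq z_{OPT}$. Combining the two bounds with $x^{\star}_t<1$ then yields $z_{NC}=\sum_{j\in T}c_j + c_t x^{\star}_t\leq z_{OPT}+c_t\leq 2z_{OPT}$, which is the claim.

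There is little genuine obstacle once Propositions \ref{chap4:prop:pack} and \ref{chap4:prop:extreme} are in hand; the only point requiring care is recognizing that the otherwise innocuous normalization $a_j+\phiprob\sigma_j\leq b$ is exactly what controls the profit of the fractional item, turning an a priori unbounded fractional contribution into a quantity dominated by $z_{OPT}$. The two sign conditions underpinning the feasibility claims, namely that $g$ is genuinely monotone (so that discarding the fractional item preserves feasibility of $T$) and that $\rho\geq 0.5$ forces $\phiprob\geq 0$, are the details I would verify explicitly before concluding.
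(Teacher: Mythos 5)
Your proof is correct and follows essentially the same route as the paper's: the paper also rounds $x^{\star}$ down to $x^D$ (your $x^T$) and up to the singleton $x^U=e_t$, and bounds $z_{NC}$ by the sum of their objective values. The only difference is that you explicitly verify the feasibility of the two rounded solutions (via monotonicity of $g$ and the normalization $a_j+\phiprob\sigma_j\leq b$), which the paper states without proof.
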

\begin{proof}
Let $x^{\star} \in \calP_{NC}$ be an optimal solution for the non-convex relaxation, which satisfies Proposition \ref{chap4:prop:extreme}. We denote $t\in N$ as the index of the fractional variable in $x^{\star}$ such that $0\leq x^{\star}_t<1$. From $x^{\star}$, we can construct two feasible solutions $x^D$ and $x^U$ for the CKP as follows: $x^{D}_j=\lfloor x^{\star}_j\rfloor $ for each $j\in N$ and $x^{U}_t=1$ while $x^{U}_j=0$ for each $j\in N\setminus\{t\}$.
Then, we can see that
\begin{equation*}
    z_{NC}=\sum_{j \in N\setminus\{t\}}c_jx_j^{\star} + c_t x_t^{\star} \leq \sum_{j \in N}c_jx_j^{D} + c_t x_t^{U} \leq 2z_{OPT},
\end{equation*}
where the last inequality holds due to the feasibility of $x^{D}$ and $x^{U}$ for the CKP. Therefore, the result follows. \qed
\end{proof}
Theorem \ref{chap4:prop:intgap} implies that the integrality gap of the non-convex relaxation is always less than 1 since 
\begin{equation*}
    \frac{z_{NC}-z_{OPT}}{z_{OPT}} \leq  \frac{z_{OPT}}{z_{OPT}} =1.
\end{equation*}
Therefore, the non-convex relaxation provides a quality-guaranteed upper bound for the CKP. 

The difference between the non-convex relaxation and the other continuous relaxations can be definitely shown in Example \ref{exam1}. As shown in the proof of Proposition \ref{chap4:prop:polygap}, $z_{C}$ and $z_{P}$ are greater than or equal to $\sqrt{n}$ for the CKP instances described in Example \ref{exam1}. These results mean that $z_{C}$ and $z_{P}$ go to infinity as $n$ goes to infinity, while $z_{OPT}$ is always 3. However, $z_{NC}$ is always less than or equal to $6$ regardless of $n$ by Theorem \ref{chap4:prop:intgap}. 

In summary, the non-convex relaxation provides a quality-guaranteed upper bound for the CKP, which is at least as tight as the other continuous relaxations. To obtain such a bound, we should solve the non-convex relaxation represented as a non-convex optimization problem. In general, non-convex optimization problems are nontrivial to solve. However, in the following section, we show that an optimal solution of the non-convex relaxation can be obtained in polynomial time.

\section{Polynomial-time algorithm for the non-convex relaxation}\label{chap4:sec:algorithm}



We first introduce a parametric method that constructs feasible solutions of the non-convex relaxation. Utilizing these feasible solutions, we reformulate the non-convex relaxation as a single-variable optimization problem. We then devise a polynomial-time algorithm for solving the reformulated problem. 

\subsection{Reformulation of the non-convex relaxation}

Let us define a function $a_j(\delta)$ for $\delta\in \mathbb{R}_+$ as 
\begin{equation*}
    a_j(\delta)=
    \begin{cases}
        a_j  &,\;\text{if }\delta=0 \\
        a_j +\frac{\phiprob}{2\sqrt{\delta}}\sigma_j^{2} &,\;\text{otherwise}
    \end{cases},
\end{equation*}
for each $j\in N$ and $\bar{N}=\{j\in N:\sigma_j=0\}$, where $|\bar{N}|=\bar{n}$. We also define $p_j(\delta)=c_j/a_j(\delta)$ for each $j\in N$. 

Now, let $\tau(\delta)$ be a function representing a variable sequence of $N$ that varies with $\delta\in \mathbb{R}_+$. When $\delta>0$, $\tau(\delta)$ is defined as a variable sequence of $N$ sorted in descending order of $p_j(\delta)$'s. In $\tau(0)$, variables of $\bar{N}$ are sorted first in descending order $p_j(\delta)$'s, followed by variables in $N\setminus \bar{N}$ being sorted in descending order $p_j(\delta)$'s. Ties in $\tau(\delta)$ are resolved in descending order of $\sigma_j^2/c_j$'s, and if these values are still equivalent, further resolution takes place in ascending order of indices. Accordingly, $\tau(\delta)$ is uniquely defined for each $\delta \in \mathbb{R}_+$.

A feasible solution of the non-convex relaxation can be obtained in a greedy manner following $\tau(\delta)$ for each $\delta\in\mathbb{R}_+$. For the sake of simplicity, let $\tau(\delta)=(\tau_1,\ldots,\tau_n)$. We can easily find $t\in N$ such that 
$$t=\min\left\{k\in N:\sum_{j=1}^{k}a_{\tau_j} + \phiprob\sqrt{\sum_{j=1}^{k}\sigma_{\tau_j}^{2}}\geq b\right\}.$$ 
We note that such $t$ always exists for each $\delta>0$ due to the assumption that $\sum_{j \in N}a_j + \phiprob \sqrt{\sum_{j\in N}\sigma_j^{2}} > b$. Additionally, let $\theta^{\star}$ be a smaller value among two solutions for the following equation for $\theta\in \mathbb{R}$:
\begin{equation*}
   \phiprob^2 \left(\sum_{j=1}^{t-1}\sigma^{2}_{\tau_j} + \sigma^{2}_{\tau_t} \theta\right) =\left(b-\sum_{j=1}^{t-1}a_{\tau_j} - a_{\tau_{t}} \theta \right)^2,
\end{equation*}
which is derived from
\begin{equation*}
   \sum_{j=1}^{t-1}a_{\tau_j} + a_{\tau_{t}} \theta + \phiprob \sqrt{\sum_{j=1}^{t-1}\sigma^{2}_{\tau_j} + \sigma^{2}_{\tau_t} \theta} =b.
\end{equation*}
It is clear that $0 < \theta^{\star}\leq 1$. We define $x(\delta)$ as
\begin{equation*}
    x_{\tau_j}(\delta) =
    \begin{cases}
        1  &,\;j<t \\
        \theta^\star &,\;j=t \\
        0  &,\;j>t
    \end{cases}
\end{equation*}
for each $j\in N$. Then, $x(\delta)$ for each $\delta\in \mathbb{R}_+$ is a feasible solution for the non-convex relaxation since $g(x(\delta))=b$ and $x(\delta)\in [0,1]^n$ by definition. Furthermore, $x(\delta)$ has at most one fractional variable and can be constructed in $O(n\log n)$ for each $\delta\in \mathbb{R}_+$. 


Let $x^{\star}\in \calP_{NC}$ be an optimal solution for the non-convex relaxation, and $\delta^{\star}=\sum_{j \in N}\sigma_j^{2}x_j^{\star}$. We now show that $x(\delta^{\star})$ is an optimal solution for the non-convex relaxation. Let us first consider the following fractional knapsack problem:
\begin{align}
    \text{FK:}\quad \max \quad &\sum_{j \in N}c_j x_j \nonumber\\
    \text{s.t} \quad & \sum_{j \in N}a_j(\delta^{\star}) x_j \leq b-\frac{\phiprob}{2} \sqrt{\delta^{\star}} \label{FKconst1}\\
    & x_j = 0, \; j \in N\setminus \bar{N}\;\text{if }\delta^{\star}=0\label{FKconst2}\\
    &x \in [0,1]^n \nonumber
\end{align}
We denote the feasible solution set of the FK as $\calP_{FK}$. 


\begin{prop}\label{chap4:prop:surrogateknapsack}
    An optimal solution for the FK is optimal for the non-convex relaxation.
\end{prop}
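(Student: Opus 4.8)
The plan is to recognize the FK as the first-order (tangent-line) linearization of the non-convex constraint $g(x)\le b$ at the point $s=\delta^{\star}$, and then exploit the concavity of the square root to relate the two feasible regions. Writing $s=\sum_{j\in N}\sigma_j^2 x_j$, concavity of $\sqrt{\cdot}$ yields, for $\delta^{\star}>0$, the tangent inequality $\sqrt{s}\le \tfrac{1}{2}\sqrt{\delta^{\star}}+\tfrac{1}{2\sqrt{\delta^{\star}}}\,s$ valid for every $s\ge 0$, with equality exactly at $s=\delta^{\star}$. Multiplying by $\phiprob$ and rearranging shows that constraint \eqref{FKconst1}, namely $\sum_{j\in N}a_j(\delta^{\star})x_j\le b-\tfrac{\phiprob}{2}\sqrt{\delta^{\star}}$, is precisely the inequality $\sum_{j\in N}a_j x_j+\phiprob\bigl(\tfrac12\sqrt{\delta^{\star}}+\tfrac{1}{2\sqrt{\delta^{\star}}}\,s\bigr)\le b$. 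Identifying \eqref{FKconst1} with this linearized form is the conceptual crux; once it is made, both comparisons below follow with no further computation.

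With this identity in hand, I would establish $z_{FK}\le z_{NC}$ and $z_{FK}\ge z_{NC}$, where $z_{FK}$ denotes the optimal objective value of the FK. For the first, I would show $\calP_{FK}\subseteq\calP_{NC}$: any $x\in\calP_{FK}$ satisfies the linearized constraint, so by the tangent inequality $\sum_{j\in N} a_j x_j+\phiprob\sqrt{s}\le \sum_{j\in N} a_j x_j+\phiprob\bigl(\tfrac12\sqrt{\delta^{\star}}+\tfrac{1}{2\sqrt{\delta^{\star}}}\,s\bigr)\le b$, i.e.\ $g(x)\le b$; together with $x\in[0,1]^n$ this gives $x\in\calP_{NC}$, hence $z_{FK}\le z_{NC}$. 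For the second, I would verify that the NC-optimal $x^{\star}$ itself lies in $\calP_{FK}$: since $\delta^{\star}=\sum_{j\in N}\sigma_j^2 x_j^{\star}$, the tangent inequality holds with equality at $x^{\star}$, and Proposition~\ref{chap4:prop:pack} ($g(x^{\star})=b$) then forces \eqref{FKconst1} to hold with equality, so $x^{\star}\in\calP_{FK}$ and $z_{FK}\ge\sum_{j\in N} c_j x_j^{\star}=z_{NC}$. Combining the two gives $z_{FK}=z_{NC}$, and since every FK-optimal solution already lies in $\calP_{NC}$ with objective value $z_{FK}=z_{NC}$, it is optimal for the non-convex relaxation.

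The step I expect to require the most care is the degenerate case $\delta^{\star}=0$, where $a_j(\delta^{\star})$ reduces to $a_j$ and the tangent argument above is unavailable. Here $\delta^{\star}=0$ forces $x_j^{\star}=0$ for every $j$ with $\sigma_j>0$, which is exactly the role of constraint \eqref{FKconst2}: it restricts $\calP_{FK}$ to vectors supported on $\bar{N}$, for which $s=0$ and $g(x)=\sum_{j\in N} a_j x_j\le b$, so again $\calP_{FK}\subseteq\calP_{NC}$; and $x^{\star}$ trivially satisfies both \eqref{FKconst2} and \eqref{FKconst1}. Apart from isolating this edge case and checking the direction of the concavity bound, the remaining work is routine, so I expect the argument to close cleanly once the linearization identity and the two inclusions are set up.
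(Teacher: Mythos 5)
Your proposal is correct and follows essentially the same route as the paper: both arguments establish $z_{FK}=z_{NC}$ by showing $x^{\star}\in\calP_{FK}$ (via $g(x^{\star})=b$ from Proposition~\ref{chap4:prop:pack}) and $\calP_{FK}\subseteq\calP_{NC}$, with the $\delta^{\star}=0$ case treated separately. Your tangent-line inequality $\sqrt{s}\le\tfrac12\sqrt{\delta^{\star}}+\tfrac{1}{2\sqrt{\delta^{\star}}}s$ is exactly the paper's arithmetic--geometric mean inequality $2\sqrt{\hat{\delta}}\le\hat{\delta}/\sqrt{\delta^{\star}}+\sqrt{\delta^{\star}}$ in a different guise, so the two proofs coincide up to presentation.
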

\begin{proof}
    We first show that $x^{\star} \in \calP_{FK}$. If $\delta^{\star}=0$, then $x_j^{\star}=0$ for each $j\in N\setminus \bar{N}$ by the definition of $x^{\star}$, which means that $x^{\star}$ satisfies the constraints \eqref{FKconst2}. On the other hand, the following equations hold in this case:
    \begin{equation*}
        b= g(x^{\star}) = \sum_{j \in N}a_j(0)x_j^{\star},   
    \end{equation*}
    where $g(x^{\star})=b$ by Proposition \ref{chap4:prop:pack}. Therefore, $x^{\star}$ satisfies the constraint \eqref{FKconst1}, and  $x^{\star}\in \calP_{FK}$ when $\delta^{\star}=0$. Now, suppose that $\delta^{\star}>0$. $g(x^{\star})$ can be expressed as follows:
    \begin{align*}
        g(x^{\star}) =  \sum_{j \in N}a_j(\delta^{\star}) x_j^{\star} + \frac{\phiprob}{2}\sqrt{\delta^{\star}}
    \end{align*}
    Then, $g(x^{\star})=b$ by Proposition \ref{chap4:prop:pack} implies that $x^{\star}$ satisfies the constraint \eqref{FKconst1}. Therefore, $x^{\star}\in \calP_{FK}$ when $\delta>0$. This result implies that $z_{FK} \geq z_{NC}$ where $z_{FK}$ is the optimal objective value of the FK. 

    We next show that $\calP_{FK}\subseteq \calP_{NC}$, that is, $z_{FK} \leq z_{NC}$. 
    Let $\hat{x} \in \calP_{FK}$ be given when $\delta^{\star}=0$. Then, we can see that 
    \begin{equation*}
        g(\hat{x})=\sum_{j \in N}a_j\hat{x}_j    
    \end{equation*}
    by the constraints \eqref{FKconst2}, while the constraint \eqref{FKconst1} implies that $g(\hat{x})\leq b$. Therefore, $\hat{x}\in \calP_{NC}$. Now, suppose that $\hat{x} \in \calP_{FK}$ is given when $\delta^{\star}>0$. The constraint \eqref{FKconst1} can be arranged as follows:
    \begin{equation}
        \sum_{j \in N}a_j \hat{x}_j + \frac{\phiprob}{2}\left( 
        \frac{\hat{\delta}}{\sqrt{\delta^{\star}}} +\sqrt{\delta^{\star}}
        \right)
        \leq b, \label{subeq1}
    \end{equation}
    where $\hat{\delta}=\sum_{j \in N}\sigma_j^{2}\hat{x}_j$.
    We can obtain the following arithmetic–geometric mean inequality.
    \begin{equation}
        2\sqrt{\hat{\delta}}\leq  \frac{\hat{\delta}}{\sqrt{\delta^{\star}}} +\sqrt{\delta^{\star}} \label{subeq2}
    \end{equation}
    From the inequalities \eqref{subeq1} and \eqref{subeq2}, we can see that
    \begin{equation*}
        g(\hat{x})= \sum_{j \in N}a_j \hat{x}_j + \phiprob\sqrt{\hat{\delta}} \leq \sum_{j \in N}a_j \hat{x}_j + \frac{\phiprob}{2}\left( 
        \frac{\hat{\delta}}{\sqrt{\delta^{\star}}} +\sqrt{\delta^{\star}}
        \right)
        \leq b.  
    \end{equation*}
    Hence, $\hat{x}\in\calP_{NC}$ and $\calP_{FK}\subseteq \calP_{NC}$. 
    
    These results implies that $z_{FK}=z_{NC}$. Therefore, an optimal solution for the FK is not only feasible, but also optimal for the non-convex relaxation. \qed
\end{proof}

\begin{prop}\label{deltastaropt}
    $x(\delta^{\star})$ is an optimal solution for the FK.
\end{prop}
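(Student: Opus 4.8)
The plan is to prove the statement by showing that $x(\delta^{\star})$ in fact coincides with the given optimal solution $x^{\star}$ (or, where ties force a subtlety, that it attains the same objective while remaining feasible for the FK). The starting point is Proposition~\ref{chap4:prop:surrogateknapsack}: it supplies $x^{\star}\in\calP_{FK}$ together with $z_{FK}=z_{NC}=\sum_{j\in N}c_jx_j^{\star}$, so $x^{\star}$ is itself an optimal solution of the FK. Hence it suffices to relate $x(\delta^{\star})$ to $x^{\star}$, and the whole argument reduces to identifying the greedy construction $x(\delta^{\star})$ with an optimal FK solution.

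The FK is a fractional knapsack problem whose item ratios are exactly $p_j(\delta^{\star})=c_j/a_j(\delta^{\star})$, so the familiar exchange argument shows that any optimal FK solution packs items in non-increasing order of $p_j(\delta^{\star})$, which is precisely the order recorded by $\tau(\delta^{\star})$. Combining this with Proposition~\ref{chap4:prop:extreme} (at most one fractional coordinate), I would argue that $x^{\star}$ has the greedy ``prefix'' shape: a prefix of $\tau(\delta^{\star})$ set to $1$, one item fractional, and the remainder $0$. Since $x(\delta^{\star})$ is by construction the prefix solution along $\tau(\delta^{\star})$ with $g(\cdot)=b$, and $g(x^{\star})=b$ by Proposition~\ref{chap4:prop:pack}, both $x^{\star}$ and $x(\delta^{\star})$ are prefix solutions with $g=b$. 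Because $g$ strictly increases as one advances along $\tau(\delta^{\star})$ (adding an item or raising the fractional coordinate), such a prefix solution is unique, giving $x(\delta^{\star})=x^{\star}$ and hence optimality of $x(\delta^{\star})$ for the FK. Equivalently, this yields $\sum_{j\in N}\sigma_j^2 x_j(\delta^{\star})=\delta^{\star}$, which is exactly the identity that turns the arithmetic--geometric mean inequality used in the proof of Proposition~\ref{chap4:prop:surrogateknapsack} into an equality, so that constraint~\eqref{FKconst1} is tight at $x(\delta^{\star})$ and $x(\delta^{\star})\in\calP_{FK}$.

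The main obstacle is ties in the ratios $p_j(\delta^{\star})$, with the degenerate case $\delta^{\star}=0$ needing a separate (but analogous) check. The exchange argument only forces $x^{\star}$ to respect the ratio order strictly; among items of equal ratio an arbitrary optimal FK solution need not pack a $\tau(\delta^{\star})$-prefix, so the $g$-greedy solution $x(\delta^{\star})$ may differ from $x^{\star}$ and a priori carry a different $\sigma^2$-mass $\sum_j\sigma_j^2x_j$. Moreover this mass feeds back into the definition of $\tau(\delta^{\star})$ itself, making the required consistency self-referential. This is precisely what the tie-breaking rule defining $\tau(\delta^{\star})$ (descending $\sigma_j^2/c_j$, then ascending index) is meant to neutralize, and the delicate step is to verify that under this rule the $\sigma^2$-mass accumulated up to the greedy breakpoint of $x(\delta^{\star})$ equals $\delta^{\star}$. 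Once that self-consistency is in hand, the uniqueness argument above applies verbatim and the identity $\sum_j\sigma_j^2x_j(\delta^{\star})=\delta^{\star}$ delivers both the feasibility and the optimality of $x(\delta^{\star})$ for the FK.
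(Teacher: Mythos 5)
Your overall strategy is close in spirit to the paper's, but it contains a genuine gap exactly where you flag the ``delicate step'': you never verify that the $\sigma^2$-mass accumulated at the greedy breakpoint of $x(\delta^{\star})$ equals $\delta^{\star}$, i.e.\ that constraint \eqref{FKconst1} is tight at $x(\delta^{\star})$. Naming the self-consistency issue and asserting that the tie-breaking rule ``is meant to neutralize'' it is not an argument, and this identity is precisely the content of the proposition, since the two stopping rules ($g(\cdot)=b$ versus \eqref{FKconst1} tight) do not obviously coincide. Moreover, the identification $x(\delta^{\star})=x^{\star}$ that your plan rests on is false in general: when two items have identical data, an optimal $x^{\star}$ may select the one that $\tau(\delta^{\star})$ orders second, so $x(\delta^{\star})\neq x^{\star}$ even though both are optimal. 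The exchange argument only pins down $x^{\star}$ up to permutations within tie classes, so $x^{\star}$ need not be a $\tau(\delta^{\star})$-prefix at all, and your fallback (``attains the same objective while remaining feasible'') is exactly what remains unproved.

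The paper closes this gap by anchoring the identification on the other side. Let $x^{FK}$ be the Dantzig-greedy optimal solution of the FK constructed along the specific order $\tau(\delta^{\star})$ (a valid non-increasing-ratio order, so greedy along it is optimal); by construction \eqref{FKconst1} is tight at $x^{FK}$. Proposition~\ref{chap4:prop:surrogateknapsack} then says $x^{FK}$ is optimal for the non-convex relaxation, so Proposition~\ref{chap4:prop:pack} gives $g(x^{FK})=b$. Thus $x^{FK}$ is a prefix of $\tau(\delta^{\star})$ satisfying $g=b$; since $g$ is strictly increasing along the prefix, that prefix is unique and equals $x(\delta^{\star})$ by definition. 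This direction requires no claim about the structure of $x^{\star}$ and no separate verification of the self-consistency identity --- it falls out of $g(x^{FK})=b$. You would also need the separate (easy) check for $\delta^{\star}=0$, where the FK restricts to $\bar N$ and the same reasoning applies. I recommend restructuring your proof around the greedy FK optimum rather than around $x^{\star}$.
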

\begin{proof}
    Let $x^{FK}$ be an optimal solution for the FK.
    Suppose that $\delta^{\star}>0$. 
    $x^{FK}$ can be constructed by greedily selecting variables of $N$ with the highest ratios of $c_j/a_j(\delta^{\star})$ until the constraint \eqref{FKconst1} is satisfied as an equation \citep{dantzig1957discrete}. 
    $x^{FK}$ is an optimal solution for the non-convex relaxation by Proposition \ref{chap4:prop:surrogateknapsack}, and $g(x^{FK})=b$ by Proposition \ref{chap4:prop:pack}. 
    In other words, $x^{FK}$ is a feasible solution for the non-convex relaxation, constructed in a greedy manner following the sequence $\tau^{\star}$ until $g(x^{FK})=b$. This definition corresponds to that of $x(\delta^{\star})$, hence, $x^{FK}=x(\delta^{\star})$.
    Now, suppose that $\delta^{\star}=0$. In this case, $\sum_{j\in \bar{N}}a_j\geq b$ since 
    \begin{equation*}
        b=g(x^{\star}) = \sum_{j\in \bar{N}}a_jx_j^{\star}
    \end{equation*}
    by Proposition \ref{chap4:prop:pack}. $x^{FK}$ can then be constructed by selecting variables of $\bar{N}$ with the highest ratios of $c_j/a_j(0)$ until $g(x^{FK})=b$, where $x^{FK}_j=0$ for each $j\in N\setminus \bar{N}$. Therefore, $x^{FK}$ is equivalent to $x(0)$, and the result follows. \qed
\end{proof}

Proposition \ref{chap4:prop:surrogateknapsack} and \ref{deltastaropt} imply that $x(\delta^\star)$ is an optimal solution for the non-convex relaxation. Therefore, we have the following result due to the feasibility of $x(\delta)$ for the non-convex relaxation.
\begin{equation*}
\sum_{j \in N}c_j x_j(\delta)\leq \sum_{j \in N}c_j x_j(\delta^\star),\; \forall \delta \in \mathbb{R}_+
\end{equation*}
This observation allows us to reformulate the non-convex relaxation as the following single-variable optimization problem. 
\begin{align*}
    \text{NCR:}\quad \max\quad &\sum_{j \in N}c_j x_j(\delta)\\
    \text{s.t}\quad &\delta \in \mathbb{R}_+
\end{align*}
$\delta^{\star}$ is an optimal solution for the NCR, but it may not be the unique one. Nonetheless, solving the NCR can provide an optimal solution for the non-convex relaxation. The following section proposes a polynomial-time algorithm for solving the NCR.

\subsection{Polynomial-time algorithm for solving the reformulated non-convex relaxation}

Recall that $x(\delta)$ is determined by the variable sequence $\tau(\delta)$ for each $\delta\in\mathbb{R}_+$. In general, there may exist exponentially many different variable sequences. However, in the subsequent discussion, we prove that the number of different variable sequences indicated by $\tau(\delta)$ is polynomially bounded.


The following example illustrates that, when $\delta>0$, $\tau(\delta)$ does not vary with $\delta$ continuously.
\begin{exam}\label{exam2}
    Let $N=\{1,2,3\}$, $\phiprob=2$, $(c_1,a_1,\sigma^{2}_1)=(1,2,3)$, $(c_2,a_2,\sigma^{2}_2)=(1,3,1)$, and $(c_3,a_3,\sigma^{2}_3)=(1,2.5,1.5)$. When $\delta>0$, $p_1(\delta)$, $p_2(\delta)$, and $p_3(\delta)$ are defined as follows, and illustrated in Figure \ref{chap4:fig:p}.
    \begin{equation*}
        p_1(\delta) = \frac{1}{2+\frac{3}{\sqrt{\delta}}}, \quad p_2(\delta) = \frac{1}{3+\frac{1}{\sqrt{\delta}}}, \quad p_3(\delta) = \frac{1}{2.5+\frac{1.5}{\sqrt{\delta}}}
    \end{equation*}
    \begin{figure}[!th]
        \centering
        \begin{tikzpicture}[scale=1]
            \draw[->] (0, 1.5) -- (8, 1.5) node[below] {$\delta$};
            \draw[->] (0, 1.5) -- (0, 5.5) node[left] {$p_j(\delta)$};
            \draw[domain=0.1:8, samples=100, smooth, variable=\x, red, thick] plot ({\x}, {15/(2+3/sqrt(1.5*\x))});
            \draw[domain=0.03:8, samples=100, smooth, variable=\x, blue,thick,  dashdotted] plot ({\x}, {15/(3+1/sqrt(1.5*\x))});
            \draw[domain=0.03:8, samples=100, smooth, variable=\x, mygreen,ultra thick,dotted] plot ({\x}, {15/(2.5+1.5/sqrt(1.5*\x))});
            \node[mark size=2pt,color=red] at (0,1.5) {\nullfont\pgfuseplotmark{o}};
            \node[mark size=2pt,color=blue] at (0,1.5) {\nullfont\pgfuseplotmark{o}};
            \node[mark size=2pt,color=mygreen] at (0,1.5) {\nullfont\pgfuseplotmark{o}};
            \node[mark size=2pt,color=black] at (0.6667,3.75) {\nullfont\pgfuseplotmark{*}};
            \node[mark size=2pt,color=black] at (2.6667,4.2855) {\nullfont\pgfuseplotmark{*}};
            \node[mark size=2pt,color=black] at (6,5) {\nullfont\pgfuseplotmark{*}};

            \draw[dashed,black ] (0.6667,3.75) -- (0.6667,1.5) node[below] {$1$};
            \draw[dashed,black ] (2.6667,4.2855) -- (2.6667,1.5) node[below] {$4$};
            \draw[dashed,black ] (6,5) -- (6,1.5) node[below] {$9$};
            
            \draw[black, thin] (1.1,6) rectangle (6.7,5.4);
            \draw[thick, red] (1.3,5.7) to (1.9,5.7);
            \node[right=0.5em] at (1.9,5.7) {$p_1(\delta)$};
            \draw[thick, blue,dashdotted] (3.1,5.7) to (3.7,5.7);
            \node[right=0.5em] at (3.7,5.7) {$p_2(\delta)$};
            \draw[ultra thick, mygreen,dotted] (4.9,5.7) to (5.5,5.7);
            \node[right=0.5em] at (5.5,5.7) {$p_3(\delta)$};
            
        \end{tikzpicture}
    \caption{Illustration of $p_1(\delta)$, $p_2(\delta)$, and $p_3(\delta)$}
    \label{chap4:fig:p}
    \end{figure}
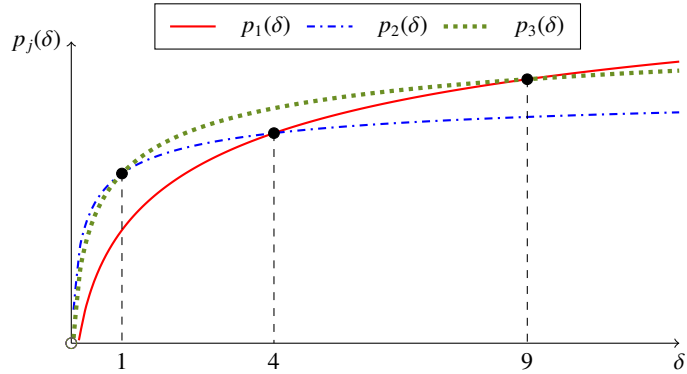
    We note that $p_2(1)=p_3(1)$, $p_1(4)=p_2(4)$, and $p_1(9)=p_3(9)$. $\tau(\delta)$ is defined as follows.
    \begin{equation*}
        \tau(\delta)=
        \begin{cases}
            (2,3,1) & \text{, if} \quad  0<\delta< 1\\
            (3,2,1) & \text{, if} \quad 1\leq \delta < 4\\
            (3,1,2) & \text{, if} \quad 4\leq \delta< 9\\
            (1,3,2) & \text{, if} \quad 9\leq \delta
        \end{cases}.
    \end{equation*}
    Here, $\tau(1)=(3,2,1)$ since $\sigma_2/c_2=1 < \sigma_3/c_3=1.5$ by definition, although $p_2(1) = p_3(1)$. The same applies to $\tau(4)$ and $\tau(9)$.
\end{exam}

$\tau(\delta)$ changes as $\delta$ increases only when the relative order of some two variables in $\tau(\delta)$ is reversed. For instance, in Example \ref{exam2}, variable 1 follows variable 3 in $\tau(\delta)$ when $0<\delta< 9$, while variable 1 is followed by variable 3 when $9\leq \delta$. Therefore, $\tau(\delta)$ changes at $\delta=9$. We term the values of $\delta$ where such reversals occur as \textit{reverse points} for those two variables. Formally, for a pair of variables $\{k,l\}\subseteq N$, a reverse point for $\{k,l\}$ is defined as $q>0$ such that $p_k(q)=p_l(q)$ and the relative order of $k$ and $l$ changes between $\tau(q-\alpha)$ and $\tau(q)$ for a sufficiently small $\alpha>0$. This observation enables us to enumerate all the different variable sequences indicated by $\tau(\delta)$ by investigating all the reverse points for every pair of variables.


We first show that there exists at most one reverse point for each pair of variables.
\begin{prop}\label{prop:orderchange}
    For some $\{k,l\}\subseteq N$ such that $\sigma_k^2/c_k \leq \sigma_l^2/c_l$, a reverse point uniquely exists if and only if
    \begin{equation}
        \frac{\sigma_k^2}{c_k} < \frac{\sigma_l^2}{c_l}, \;\frac{a_k}{c_k} > \frac{a_l}{c_l} \label{uniquecondition}.
    \end{equation}
    If a reverse point $q$ exists, then $k$ is followed by $l$ in $\tau(\delta)$ when $0<\delta< q$, while $k$ follows $l$ in $\tau(\delta)$ when $q\leq \delta$.
\end{prop}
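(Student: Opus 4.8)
The plan is to reduce the entire statement to a sign analysis of a single explicit function of $\delta$. Since $a_k(\delta),a_l(\delta)>0$ for every $\delta>0$, the sign of $p_k(\delta)-p_l(\delta)=c_k/a_k(\delta)-c_l/a_l(\delta)$ equals the sign of its numerator
$$D(\delta):=c_k a_l(\delta)-c_l a_k(\delta)=\left(c_k a_l-c_l a_k\right)+\frac{\phiprob}{2\sqrt{\delta}}\left(c_k\sigma_l^2-c_l\sigma_k^2\right),$$
which I obtain by substituting the definition of $a_j(\delta)$. A reverse point is by definition a value $q>0$ with $p_k(q)=p_l(q)$, i.e. $D(q)=0$, at which the order of $k$ and $l$ flips; so the whole proposition becomes a question about the zeros of $D$ and the behaviour of $D$ across them.

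First I would record that $1/\sqrt{\delta}$ is strictly decreasing on $(0,\infty)$, tends to $+\infty$ as $\delta\to 0^+$, and to $0$ as $\delta\to\infty$, so that the monotonicity and limits of $D$ are governed entirely by the sign of the coefficient $c_k\sigma_l^2-c_l\sigma_k^2$, which under the hypothesis $\sigma_k^2/c_k\le\sigma_l^2/c_l$ is nonnegative. I then split into cases. When $\sigma_k^2/c_k=\sigma_l^2/c_l$ the coefficient vanishes and $D\equiv c_k a_l-c_l a_k$ is constant, so $D$ has no isolated sign change and the relative order never changes: no reverse point. When $\sigma_k^2/c_k<\sigma_l^2/c_l$ the coefficient is strictly positive, so $D$ is strictly decreasing with $D(0^+)=+\infty$ and limit $c_k a_l-c_l a_k$ at infinity. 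If $a_k/c_k\le a_l/c_l$, i.e. $c_k a_l-c_l a_k\ge 0$, then $D$ stays strictly above this nonnegative limit and is positive throughout, again giving no reverse point. Only when both strict inequalities of \eqref{uniquecondition} hold does $D$ run strictly from $+\infty$ down to the strictly negative limit $c_k a_l-c_l a_k$, crossing zero exactly once; solving $D(q)=0$ yields the unique reverse point
$$\sqrt{q}=\frac{\phiprob}{2}\cdot\frac{c_k\sigma_l^2-c_l\sigma_k^2}{c_l a_k-c_k a_l},$$
whose numerator and denominator are both positive. This simultaneously establishes the equivalence and the uniqueness.

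It remains to pin down the direction of the flip, and here the tie-breaking convention does the work. Since $D$ is strictly decreasing with its single zero at $q$, we have $D(\delta)>0$ (so $p_k(\delta)>p_l(\delta)$ and $k$ precedes $l$) for $0<\delta<q$, and $D(\delta)<0$ (so $l$ precedes $k$) for $\delta>q$. At exactly $\delta=q$ the two ratios tie, and the tie-breaking rule orders by descending $\sigma_j^2/c_j$; because $\sigma_k^2/c_k<\sigma_l^2/c_l$ under \eqref{uniquecondition}, $l$ is placed ahead of $k$ at $\delta=q$ as well. Thus $k$ is followed by $l$ for $0<\delta<q$ and $k$ follows $l$ for $q\le\delta$, matching the statement; in particular the order genuinely differs between $\tau(q-\alpha)$ and $\tau(q)$ for small $\alpha>0$, confirming that $q$ is indeed a reverse point.

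The one place demanding care, and the only real obstacle, is the boundary value $\delta=q$: the sign of $D$ alone leaves $k$ and $l$ tied there, so the claimed strict direction ``$k$ follows $l$ when $q\le\delta$'' is decided not by $D$ but by the tie-breaking rule, and it is essential that the strict inequality $\sigma_k^2/c_k<\sigma_l^2/c_l$ in \eqref{uniquecondition} forces the tie-break to resolve consistently with the $\delta>q$ side. Everything else is routine sign bookkeeping for the function $D$, which is affine in $1/\sqrt{\delta}$.
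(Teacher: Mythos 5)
Your proof is correct and follows essentially the same route as the paper: both reduce the claim to the equation $p_k(\delta)=p_l(\delta)$ viewed as affine in $1/\sqrt{\delta}$, use its strict monotonicity to get existence and uniqueness of the crossing, and invoke the tie-breaking rule (descending $\sigma_j^2/c_j$) to settle the order at $\delta=q$ itself. If anything, your sign analysis of $D(\delta)$ is the more complete version, since it explicitly disposes of the subcases ($\sigma_k^2/c_k=\sigma_l^2/c_l$ with $a_k/c_k\neq a_l/c_l$, and $\sigma_k^2/c_k<\sigma_l^2/c_l$ with $a_k/c_k\le a_l/c_l$) that the paper's two-case enumeration and accompanying figure leave implicit.
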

\begin{proof}
    By definition, a reverse point of $\{k,l\}$ is a solution of $p_k(\delta)=p_l(\delta)$ which can be rewritten as follows.
    \begin{equation}
        \left(\frac{\sigma_k^2}{c_k} -\frac{\sigma_l^2}{c_l}\right)\frac{\phiprob}{2\sqrt{\delta}} = \frac{a_l }{c_l} - \frac{a_k}{c_k} \label{eqchange}
    \end{equation}
    The condition for this equation to have a solution can be categorized into the following cases.
    \begin{casesp}
        \item \eqref{eqchange} is valid for all $\delta>0$ when
        \begin{equation*}
            \frac{\sigma_k^2}{c_k} = \frac{\sigma_l^2}{c_l},\;\frac{a_l}{c_l} = \frac{a_k}{c_k}
        \end{equation*}
        \item \eqref{eqchange} has a unique solution $q$ when \eqref{uniquecondition}
    \end{casesp}
    In Case 1, no reverse point exists for $\{k,l\}$ since the relative order of $k$ and $l$ in $\tau(\delta)$ remains the same for all $\delta>0$. On the other hand, in Case 2, the relative order of $k$ and $l$ changes once at $\delta=q$, as illustrated in Figure \ref{figcase}. 
    \begin{figure} [!th]
    \centering
    \begin{tikzpicture}[scale=0.5]
        \draw[->] (0, 0) -- (7, 0) node[below] {$\delta$};
        \draw[->] (0, 0) -- (0, 6.5) node[left] {$p_j(\delta)$};
        \draw[domain=0.01:7, samples=100, smooth, variable=\x, red, thick] plot ({\x}, {15/(2+3/sqrt(1.5*\x))});
        \draw[domain=0.01:7, samples=100, smooth, variable=\x, blue, ultra thick,  dotted] plot ({\x}, {15/(1+5/sqrt(1.5*\x))});
        \node[mark size=2pt,color=red] at (0,0) {\pgfuseplotmark{o}};
        \node[mark size=2pt,color=blue] at (0,0) {\pgfuseplotmark{o}};
        \node[mark size=2pt,color=black] at (2.6667,4.2855) {\pgfuseplotmark{*}};
        \draw[dashed,black ] (2.6667,4.2855) -- (2.6667,0) node[below] {$q$};
        \draw[black, thin] (0.3,7.5) rectangle (7.3,6.3);
            \draw[thick, red] (0.5,6.9) to (1.3,6.9);
            \node[right=0.5em] at (1.3,6.9) {$p_k(\delta)$};
            \draw[ultra thick, blue,dotted] (4.3,6.9) to (5.1,6.9);
            \node[right=0.5em] at (5.1,6.9) {$p_l(\delta)$};
    \end{tikzpicture}
    \caption{Shapes of $p_k(\delta)$ (solid) and $p_l(\delta)$ (dotted) in Case 2}\label{figcase}	
    \end{figure}
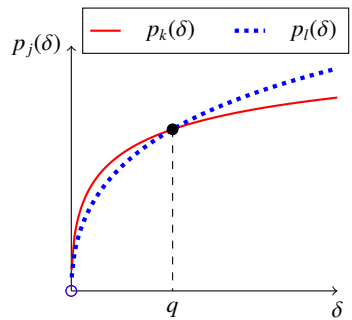
    Therefore, $q$ is the unique reverse point for $\{k,l\}$. In other words, a reverse point exists if and only in Case 2, and such reverse point is unique. Furthermore, as shown in Figure \ref{figcase}, $p_k(\delta) > p_l(\delta)$ when $0<\delta< q$, while $p_k(\delta)\leq p_l(\delta)$ when $q\leq \delta$. Then, $k$ is followed by $l$ in $\tau(\delta)$ when $0<\delta< q$, while $k$ follows $l$ in $\tau(\delta)$ when $q<\delta$. We note that $l$ is followed by $k$ in $\tau(q)$ since $\sigma_k^2/c_k < \sigma_l^2/c_l$ in this case. Therefore, the result follows. \qed
\end{proof}

By Proposition \ref{prop:orderchange}, there exist $O(n^2)$ reverse points for every pair of variables. We now show that all different variable sequences represented by $\tau(\delta)$ can be obtained by investigating such reverse points.


Let $Q$ be the set of reverse points for every pair of variables. For the sake of simplicity, let $Q=\{\delta_1,\ldots,\delta_m\}$ where $\delta_1\leq\cdots\leq \delta_m$ and $M=\{1,\ldots,m\}$. Subsequently, we define $\Delta = \{0,\gamma\}\cup Q$, where $0<\gamma<\delta_1$. Of course, $Q$ may be empty. In this case, we define $\gamma$ as any positive real number. 

\begin{prop}\label{chap4:prop:equiv}
    For each $\delta\in \mathbb{R}_+$, there exists $q\in \Delta$ such that $\tau(\delta) = \tau(q)$.
\end{prop}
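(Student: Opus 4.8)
The plan is to show that $\tau(\delta)$ is piecewise constant in $\delta$, with breakpoints occurring only at reverse points, and that $\Delta$—consisting of $0$, $\gamma$, and all reverse points—contains a representative of each constant piece. The case $\delta=0$ is immediate since $0\in\Delta$, so I would concentrate on $\delta>0$.

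First I would recall from Proposition \ref{prop:orderchange} that, for any pair $\{k,l\}\subseteq N$, its relative order in $\tau(\delta)$ is governed entirely by its reverse point: if no reverse point exists, the order of $k$ and $l$ is the same for all $\delta>0$; if the reverse point $q$ exists, the order is one way on $(0,q)$ and the opposite way on $[q,\infty)$, where the switch value $q$ itself belongs to the latter interval (this is exactly the statement that $k$ follows $l$ in $\tau(\delta)$ when $q\leq\delta$, the tie at $\delta=q$ being broken by the rule on $\sigma_j^2/c_j$). Since $\tau(\delta)$ is the total order obtained by sorting the variables according to $p_j(\delta)$ with the prescribed tie-breaking, and that total order is determined by all its pairwise relative orders, $\tau(\delta)$ can only change at a reverse point.

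Next I would use the sorted reverse points to partition $(0,\infty)$. Writing the distinct elements of $Q$ in increasing order, they cut $(0,\infty)$ into the first interval $(0,\delta_1)$, the half-open intervals $[\delta_i,\delta_{i+1})$ between consecutive distinct reverse points, and one unbounded interval above the largest reverse point; if $Q=\emptyset$, then $(0,\infty)$ is a single piece. On each such interval, every pair $\{k,l\}$ lies consistently on one side of its reverse point or exactly at it: a pair whose reverse point is $\leq\delta_i$ is in its post-switch order throughout $[\delta_i,\delta_{i+1})$, a pair whose reverse point is $\geq\delta_{i+1}$ is in its pre-switch order throughout, and no reverse point lies strictly inside the open interval $(\delta_i,\delta_{i+1})$ by construction. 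Hence all pairwise orders—and therefore $\tau(\delta)$—are constant on the interval and equal to the value at its left endpoint. Since $\gamma\in(0,\delta_1)$ and each $\delta_i\in Q\subseteq\Delta$, every constant piece is represented by a point of $\Delta$, which gives the claim.

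The main point requiring care is the boundary behaviour at a reverse point: I must invoke the precise form of Proposition \ref{prop:orderchange} stating that at $\delta=q$ the order already agrees with the interval to the \emph{right}, so that $\tau(\delta_i)$ correctly represents all of $[\delta_i,\delta_{i+1})$ rather than the interval to its left; this is why the pieces are taken half-open and include their left endpoints. A related subtlety is that several pairs may share the same reverse point (equal entries in $Q$), causing several simultaneous switches at one value of $\delta$; this does not disturb the argument, because the tie-breaking rule defines a single consistent total order at that value that still matches the adjacent right-hand interval.
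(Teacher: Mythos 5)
Your proposal is correct and follows essentially the same route as the paper's proof: partition $(0,\infty)$ into $(0,\delta_1)$, the half-open intervals between consecutive reverse points, and the final unbounded piece, observe via Proposition \ref{prop:orderchange} that no pairwise order changes within a piece, and note that $\gamma$ and the $\delta_i$ supply representatives in $\Delta$. Your added care about the left endpoints being included in their right-hand pieces and about coinciding reverse points makes explicit what the paper's proof leaves implicit, but it is the same argument.
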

\begin{proof}
    We only consider the case when $\delta>0$ since it is clear when $\delta=0$. Suppose that $Q=\emptyset$. This implies that $\tau(\delta)$ represents the same variable sequence for each $\delta>0$. Therefore, $\tau(\delta)=\tau(\gamma)$ for each $\delta>0$. Now, suppose that $Q\neq \emptyset$ and let $\hat{\delta}>0$ be given. When $0< \hat{\delta} < \delta_1$, then $\tau(\hat{\delta})=\tau(\gamma)$ since the variable sequence indicated by $\tau(\delta)$ does not change when $0<\delta<\delta_1$. When $\delta_1 \leq \hat{\delta}<\delta_m$, there exists $i\in M\setminus\{m\}$ such that $\delta_{i}\leq \hat{\delta}< \delta_{i+1}$. Since there are no two variables such that their relative order in $\tau(\delta)$ is reversed when $\delta_{i}\leq \delta<\delta_{i+1}$, $\tau(\delta)$ does not change in this range. Therefore, $\tau(\hat{\delta})=\tau(\delta_i)$. If $\hat{\delta}\geq \delta_m$, then $\tau(\hat{\delta})=\tau(\delta_m)$ since the variable sequence indicated by $\tau(\delta)$ does not change when $\delta_m\leq \delta$. Therefore, the result follows. \qed
\end{proof}
Proposition \ref{chap4:prop:equiv} states that $\tau(q)$'s for all $q\in \Delta$ can represent all distinct variable sequences indicated by $\tau(\delta)$. Therefore, there exists $q^{\star}\in \Delta$ such that $x(q^{\star})=x(\delta^{\star})$. Accordingly, the NCR can be solved
by comparing the objective values corresponding to $x(q)$'s for all $q\in \Delta$. Although $|\Delta|$ is $O(n^2)$, examining all $x(q)$'s may be time-consuming when $n$ is large. In the subsequent discussion, we show that $\Delta$ can be reduced by introducing upper and lower bounds for $\delta^\star$.

Recall that $\delta^{\star} =\sum_{j \in N}\sigma_j^2 x_j^{\star}$ where $x^{\star}$ is an optimal solution for the non-convex relaxation. Based on this definition, we define $\delta_U$ as
\begin{equation}
    \delta_{U} = \max_{x\in [0,1]^{n}}\left\{\sum_{j \in N}\sigma^{2}_j x_j: g(x)\leq b\right\}, \label{chap4:prob:boundmax}
\end{equation}
and $\delta_L$ as
\begin{equation}
    \delta_{L} = \min_{x\in [0,1]^{n}}\left\{\sum_{j \in N}\sigma^{2}_j x_j: g(x)\geq b\right\}.\label{chap4:prob:boundmin}
\end{equation}
$x^{\star}$ is feasible for both the problem \eqref{chap4:prob:boundmax} and \eqref{chap4:prob:boundmin} since $g(x^{\star})=b$ by Proposition \ref{chap4:prop:pack}. Therefore, $\delta_{U}$ and $\delta_L$ are upper and lower bounds for $\delta^{\star}$, respectively. 

\begin{prop}\label{prop:bound}
    $\delta_{U}$ and $\delta_L$ can be obtained in $O(n\log n)$, respectively.
\end{prop}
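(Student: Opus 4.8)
The plan is to reduce each of the two optimization problems \eqref{chap4:prob:boundmax} and \eqref{chap4:prob:boundmin} to a one-dimensional threshold problem in the total variance $\delta=\sum_{j\in N}\sigma_j^2 x_j$, which can then be resolved with a single sort of the items followed by a linear scan. The guiding observation is that both $\sum_{j\in N}\sigma_j^2 x_j$ and $g(x)$ are nondecreasing as any coordinate $x_j$ is increased; consequently, in each problem the constraint is tight at optimality and the optimal value of $\delta$ is the endpoint of a monotone feasibility interval. The item ordering used throughout is by the ratio $a_j/\sigma_j^2$, which measures mean weight per unit of variance.

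For $\delta_U$, I would first define $A(\delta)=\min\{\sum_{j\in N}a_j x_j : \sum_{j\in N}\sigma_j^2 x_j = \delta,\ x\in[0,1]^n\}$, the least mean weight needed to accumulate variance $\delta$. Since zero-variance items cannot contribute to $\delta$, the minimizer assigns them $0$, and standard continuous-knapsack reasoning shows $A(\delta)$ is obtained greedily by filling items in increasing order of $a_j/\sigma_j^2$; hence $A$ is piecewise linear, convex, and increasing with $O(n)$ breakpoints. The key step is to show that $\delta_U$ equals the largest $\delta$ satisfying $A(\delta)+\phiprob\sqrt{\delta}\le b$: for any $x$ with $g(x)\le b$ and variance $\delta_x$ one has $b\ge g(x)=\sum_{j\in N}a_j x_j+\phiprob\sqrt{\delta_x}\ge A(\delta_x)+\phiprob\sqrt{\delta_x}$, which gives one direction, while the minimizer attaining $A$ at that largest $\delta$ is feasible and attains that variance, giving the other. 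Because $A(\delta)+\phiprob\sqrt{\delta}$ is increasing in $\delta$, this largest $\delta$ is located by scanning the $O(n)$ breakpoints produced by the sort and solving a single linear-plus-square-root equation inside the interval where the function crosses $b$.

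For $\delta_L$ I would argue symmetrically using $\Gamma(\delta)=\max\{g(x):\sum_{j\in N}\sigma_j^2 x_j\le\delta,\ x\in[0,1]^n\}$, the largest attainable value of $g$ under variance budget $\delta$. Here every zero-variance item should be set to $1$, since it raises $g$ at no variance cost, and the remaining budget is spent greedily on items with the \emph{largest} $a_j/\sigma_j^2$; thus $\Gamma$ is a monotone piecewise expression (the greedy weight, which is concave and piecewise linear, plus $\phiprob\sqrt{\delta}$) computable from one sort. Then $\delta_L$ is the smallest $\delta$ with $\Gamma(\delta)\ge b$, justified by the same two-sided comparison as for $\delta_U$, and again located by scanning the breakpoints. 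In both cases the running time is dominated by the sort, giving $O(n\log n)$, while the comparisons and the single equation solve are $O(n)$.

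The step I expect to require the most care is the exact reduction to the one-dimensional problem, particularly for $\delta_U$: the feasible region $\{x:g(x)\le b\}$ is \emph{not} convex, being the region where the concave function $g$ lies below a level, so correctness cannot be read off from convex optimization and must instead rest on the monotonicity of $A(\delta)+\phiprob\sqrt{\delta}$ together with the sandwiching inequality above. The bookkeeping for zero-variance items, which are excluded in the $\delta_U$ construction but forced to $1$ in the $\delta_L$ construction, and the handling of the boundary cases permitted by the standing assumptions $a_j+\phiprob\sigma_j\le b$ and $g(\mathbf 1)>b$ are the remaining places where the argument must be stated precisely.
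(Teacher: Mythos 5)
Your argument is correct, and it arrives at exactly the same two greedy procedures as the paper --- both problems reduce to sorting the items by $a_j/\sigma_j^2$ (ascending to maximize variance for $\delta_U$, descending to minimize it for $\delta_L$) followed by a linear scan --- but your correctness proofs take a genuinely different route. For $\delta_U$ the paper does not argue directly: it observes that \eqref{chap4:prob:boundmax} is the non-convex relaxation of a CKP instance with $c_j=\sigma_j^2$, notes that condition \eqref{uniquecondition} then fails for every pair of items, so the set of reverse points is empty and $\Delta=\{0,\gamma\}$, and concludes from the machinery of Section \ref{chap4:sec:algorithm} that comparing $x(0)$ and $x(\gamma)$ suffices. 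Your sandwich argument via the value function $A(\delta)$ and the monotonicity of $A(\delta)+\phiprob\sqrt{\delta}$ reaches the same conclusion in a self-contained way, and as a side benefit it avoids having to check that the NCR framework (stated under $c_j>0$) still applies after substituting $c_j=\sigma_j^2$, which is zero for zero-variance items. For $\delta_L$ the paper proves optimality of the greedy solution $x(\tau_{\min})$ by contradiction against the auxiliary fractional min-knapsack LP \eqref{chap4:prob:subbound}; your threshold function $\Gamma(\delta)$ plays the same role more transparently, since the feasible set $\{\delta:\Gamma(\delta)\ge b\}$ is visibly an upper interval. You are also right to flag that the non-convexity of $\{x:g(x)\le b\}$ is the delicate point for $\delta_U$: your monotonicity-plus-sandwich argument is precisely what substitutes for the convexity one cannot invoke, and your handling of the zero-variance items (excluded from the $A$-minimizer, forced to $1$ in the $\Gamma$-maximizer) is the correct bookkeeping.
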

\begin{proof}
    The problem \eqref{chap4:prob:boundmax} is a special case of the CKP where $c_j=\sigma_j^2$ for each $j\in N$. Let us consider the NCR of this problem. By definition, the set $Q$ is empty since no two variables satisfy the condition \eqref{uniquecondition}. Accordingly, $\Delta=\{0,\gamma\}$ and the problem \eqref{chap4:prob:boundmax} can be solved by comparing $x(0)$ and $x(\gamma)$ which can be constructed in $O(n\log n)$. 
    
    Now, let us consider the problem \eqref{chap4:prob:boundmin}. Let $\tau_{\min}$ be the variable sequence of $N$ sorted in ascending order of $\sigma_j^{2}/a_j$. We can obtain $x(\tau_{\min})$ in $O(n\log n)$. We then show that $x(\tau_{\min})$ is an optimal solution for the problem \eqref{chap4:prob:boundmin}. Suppose that $x(\tau_{\min})$ is not optimal for the problem \eqref{chap4:prob:boundmin}. Then, there exists $\hat{x}$ such that $g(\hat{x})\geq b$ and $\sigma(\hat{x})<\sigma(x(\tau_{\min}))$ where $\sigma(x)=\sum_{j \in N}\sigma_j^2 x_j$. Let us consider the following problem.
\begin{align}
    \min\quad &\sum_{j \in N}\sigma^2_j x_j \nonumber\\
    \text{s.t}\quad &\sum_{j\in N}a_j x_j \geq b - \phiprob\sqrt{\sigma(x(\tau_{\min}))} \label{chap4:prob:subbound}\\
    & x \in [0,1]^n,\nonumber
\end{align}
This problem is the LP relaxation of a min-knapsack problem, which can be transformed into a fractional knapsack problem \citep{martello1990knapsack}. Accordingly, it can be easily shown that $x(\tau_{\min})$ is an optimal solution for the problem \eqref{chap4:prob:subbound}, where the optimal objective value is $\sigma(x(\tau_{\min}))$. On the other hand, $\hat{x}$ is also feasible for this problem because 
\begin{equation*}
    \sum_{j \in N}a_j \hat{x}_j \geq b-\phiprob\sqrt{\sigma(\hat{x})} > b-\phiprob\sqrt{\sigma(x(\tau_{\min}))} 
\end{equation*}
where the last inequality holds due to the assumption that $\sigma(\hat{x})<\sigma(x(\tau_{\min}))$. Furthermore, the objective value of the problem \eqref{chap4:prob:subbound} corresponding to $\hat{x}$ is $\sigma(\hat{x})$ which is less than $\sigma(x(\tau_{\min}))$. This result contradicts to the fact that $x(\tau_{\min})$ is an optimal solution for the problem \eqref{chap4:prob:subbound}. Therefore, such $\hat{x}$ cannot exist, and $x(\tau_{\min})$ is an optimal solution for the problem \eqref{chap4:prob:boundmin}. \qed
\end{proof}

Proposition \ref{prop:bound} states that the upper and lower bound for $\delta^{\star}$ can be obtained efficiently. By utilizing these bounds, we define $\Delta^{\star}$ as follows.
\begin{equation*}
    \Delta^{\star} = \{q\in \Delta: \delta_{L} \leq q \leq \delta_{U} \} \cup \{\max\{\delta_{L},\gamma\}\}
\end{equation*}
It is clear that $|\Delta^{\star}|\leq |\Delta|$. In addition, the following result holds. 
\begin{prop}\label{prop:equivopt}
    There exists $q^{\star}\in \Delta^{\star}$ such that $\tau(q^{\star})=\tau(\delta^\star)$.  
\end{prop}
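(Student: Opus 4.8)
The plan is to locate a point of $\Delta^{\star}$ lying in the same maximal interval of constancy of $\tau$ as $\delta^{\star}$. The proof of Proposition \ref{chap4:prop:equiv} shows that $\tau(\delta)$ changes value only at the reverse points $Q=\{\delta_1,\ldots,\delta_m\}$; more precisely, $\tau$ is constant on each interval $[\delta_i,\delta_{i+1})$ and on $[\delta_m,\infty)$, while on $(0,\delta_1)$ it equals $\tau(\gamma)$. I would use the two facts established earlier, namely $\delta_L\le\delta^{\star}\le\delta_U$ and that $\Delta^{\star}$ contains every reverse point lying in $[\delta_L,\delta_U]$ together with the extra point $\max\{\delta_L,\gamma\}$. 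The guiding principle is that whenever a closed interval $[\alpha,\beta]$ contains no reverse point, $\tau$ is constant on it (otherwise some pair of variables would reverse order inside, forcing a reverse point there), so $\tau(\alpha)=\tau(\beta)$.

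First I would dispose of the degenerate case $\delta^{\star}=0$: since $\delta_L\ge 0$ and $\delta_L\le\delta^{\star}=0$, we get $\delta_L=0$, hence $0\in\Delta^{\star}$, and $q^{\star}=0$ works because $\tau(0)=\tau(\delta^{\star})$. For $\delta^{\star}>0$ I would split into two cases according to whether a reverse point lies in $[\delta_L,\delta^{\star}]$. If such a reverse point exists, let $q^{\star}$ be the largest one; it is then also the largest reverse point not exceeding $\delta^{\star}$, so $\delta^{\star}$ lies in $[q^{\star},\delta_{i+1})$ (or in $[\delta_m,\infty)$), whence $\tau(q^{\star})=\tau(\delta^{\star})$. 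Membership $q^{\star}\in\Delta^{\star}$ is immediate from $\delta_L\le q^{\star}\le\delta^{\star}\le\delta_U$.

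The remaining case, in which $[\delta_L,\delta^{\star}]$ contains no reverse point, is where the extra element $\max\{\delta_L,\gamma\}$ is needed, and I expect the bookkeeping here to be the main obstacle. I would set $q^{\star}=\max\{\delta_L,\gamma\}$, which lies in $\Delta^{\star}$ by definition, and verify $\tau(q^{\star})=\tau(\delta^{\star})$ in two subcases. If $\delta_L\ge\gamma$, then $q^{\star}=\delta_L$ and, since $[\delta_L,\delta^{\star}]$ contains no reverse point, $\tau(q^{\star})=\tau(\delta^{\star})$ directly. If $\delta_L<\gamma$, then $q^{\star}=\gamma$, and here I would first argue that $\delta^{\star}<\delta_1$: otherwise the smallest reverse point $\delta_1$ would satisfy $\delta_L<\gamma<\delta_1\le\delta^{\star}$ and thus lie in $[\delta_L,\delta^{\star}]$, a contradiction. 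Consequently both $\gamma$ and $\delta^{\star}$ lie in $(0,\delta_1)$, on which $\tau$ is constant, giving $\tau(\gamma)=\tau(\delta^{\star})$. The only genuinely delicate points are keeping track of which endpoint of the constancy interval is selected, and remembering that $\tau(0)$ obeys a separate definition, so the case $\delta^{\star}=0$ must be excluded from the interval argument and handled on its own.
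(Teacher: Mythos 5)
Your argument is correct and follows essentially the same route as the paper: both rest on the constancy intervals of $\tau(\delta)$ from Proposition~\ref{chap4:prop:equiv}, the bounds $\delta_L\leq\delta^{\star}\leq\delta_U$, and a case split that uses a reverse point in $[\delta_L,\delta_U]$ when one is available and falls back on the extra element $\max\{\delta_L,\gamma\}$ otherwise. The only cosmetic difference is that you branch on whether $[\delta_L,\delta^{\star}]$ contains a reverse point, whereas the paper branches on where the point $q$ supplied by Proposition~\ref{chap4:prop:equiv} falls relative to $[\delta_L,\delta_U]$; the cases match up one for one.
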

\begin{proof}
   By Proposition \ref{chap4:prop:equiv}, there exists $q\in \Delta$ such that $\tau(q)=\tau(\delta^{\star})$. If $\delta_L\leq q\leq \delta_U$, then $q\in \Delta^{\star}$. Suppose that $q<\delta_L$. We note that $\gamma\leq q$ in this case since $q=0$ implies that $\delta^{\star}=\delta_L=0$ as shown in the proof of Proposition \ref{chap4:prop:equiv}. Then, $\max\{\delta_L,\gamma\}=\delta_L$ since $\gamma\leq q<\delta_L\leq \delta^{\star}$, that is, $\delta_L\in \Delta^{\star}$. On the other hand, $\tau(\delta_L)=\tau(\delta^{\star})$ since $\tau(\delta^{\star})=\tau(q)$ implies that $\tau(\delta)=\tau(\delta^{\star})$ for each $\delta$ such that $q\leq \delta\leq \delta^{\star}$. Now, suppose that $\delta_U<q$. Then, $\delta_L\leq \delta^{\star}<q$, which implies that $q=\gamma$ by the proof of Proposition \ref{chap4:prop:equiv}. In this case, $\max\{\delta_L,\gamma\}=\gamma$ and $q=\gamma \in \Delta^{\star}$. Therefore, the result follows. \qed
\end{proof}
Proposition \ref{prop:equivopt} implies that the NCR can be solved by comparing $x(q)$'s for all $q \in \Delta^{\star}$. Based on these results, we propose an efficient algorithm for solving the NCR, described in Algorithm \ref{chap4:alg:enum}. 
\begin{algorithm}[ht]
\caption{Algorithm for solving the NCR}
\small
\begin{algorithmic}[1]
\State $\delta_U,\delta_L\gets$ Solve the problem \eqref{chap4:prob:boundmax} and \eqref{chap4:prob:boundmin}, respectively ;
\State $\Delta^{\star} \gets \emptyset$, $\delta_1 \gets \infty$, and $z^{\star} \gets 0$ ;
\For{$\{k,l\} \subseteq N$}
\If{Condition \eqref{uniquecondition} is satisfied}
    \State $q\gets $ Solve $p_k(\delta)=p_l(\delta)$ ;
    \State $\delta_1\gets q$ if $q <\delta_1$ ;
    \State $\Delta^{\star} \gets \Delta^{\star} \cup \{q\}$ if $\delta_L \leq q \leq \delta_U$ ;
\EndIf
\EndFor
\State $\gamma \gets \alpha$ such that $0< \alpha<\delta_1$ ;
\State $\Delta^{\star} \gets \Delta^{\star} \cup \{\max\{\gamma,\delta_L\}\}$ ;
\For{$q \in \Delta^{\star}$}
\State Construct $\tau(q)$ and $x(q)$ ;
\State $z^{\star}\gets \sum_{j \in N}c_j x_j(q)$ if $z^{\star}<\sum_{j \in N}c_j x_j(q)$ ;
\EndFor
\State \Return $z^{\star}$ ;
\end{algorithmic}
\label{chap4:alg:enum}
\end{algorithm}

\begin{thm}\label{thm:poly}
The NCR can be solved in $O(n^3 \log n)$.
\end{thm}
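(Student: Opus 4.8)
The plan is to prove the theorem by showing that Algorithm \ref{chap4:alg:enum} both correctly computes $z_{NC}$ and terminates within $O(n^3\log n)$ operations, so I would proceed in two stages: a correctness argument followed by a line-by-line running-time analysis.

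For correctness, the key observation I would exploit is that the candidate solution $x(\delta)$ is entirely determined by the variable sequence $\tau(\delta)$: once $\tau(\delta)$ is fixed, both the breakpoint index $t$ and the fractional value $\theta^{\star}$ are computed from the non-parametric data $a_j$, $\sigma_j^2$, $b$ alone, so whenever two parameters induce the same sequence they induce the same solution. By Proposition \ref{prop:equivopt} there is some $q^{\star}\in\Delta^{\star}$ with $\tau(q^{\star})=\tau(\delta^{\star})$, whence $x(q^{\star})=x(\delta^{\star})$, which is optimal for the NCR by Propositions \ref{chap4:prop:surrogateknapsack} and \ref{deltastaropt}. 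Since the second loop evaluates $\sum_{j\in N}c_j x_j(q)$ over every $q\in\Delta^{\star}$ and retains the maximum, the returned value $z^{\star}$ equals $z_{NC}$.

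For the running time I would bound each block of the algorithm. Line 1 costs $O(n\log n)$ by Proposition \ref{prop:bound}. The first loop (lines 3--9) ranges over the $O(n^2)$ pairs $\{k,l\}\subseteq N$; for each pair, checking condition \eqref{uniquecondition} and solving \eqref{eqchange} (which is linear in $1/\sqrt{\delta}$), together with the bookkeeping updates, are all $O(1)$, so this block is $O(n^2)$, and by Proposition \ref{prop:orderchange} it simultaneously certifies that $|\Delta^{\star}|\le|\Delta|=O(n^2)$. The second loop (lines 12--15) iterates over the $O(n^2)$ points of $\Delta^{\star}$, and each iteration constructs $\tau(q)$ by sorting in $O(n\log n)$ time, then builds $x(q)$ and evaluates its objective in $O(n)$; hence each iteration is $O(n\log n)$ and the whole loop is $O(n^2\cdot n\log n)=O(n^3\log n)$. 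Summing the blocks yields $O(n^3\log n)$.

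The only real obstacle is identifying the dominant term, namely the $O(n^2)$ sorts hidden inside the second loop: the bound rests on the facts, already established, that there are at most $O(n^2)$ distinct reverse points and that each $x(q)$ can be built in $O(n\log n)$. All remaining steps are strictly cheaper, so the overall complexity is governed by this loop and the claimed $O(n^3\log n)$ follows.
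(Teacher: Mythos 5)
Your proposal is correct and follows essentially the same route as the paper: both bound the algorithm by $O(n^2)$ candidate reverse points, each requiring an $O(n\log n)$ construction of $x(q)$, with the remaining steps (computing $\delta_L,\delta_U$ and assembling $\Delta^{\star}$) strictly cheaper. The only difference is that you spell out the correctness argument (that $\tau(q^{\star})=\tau(\delta^{\star})$ forces $x(q^{\star})=x(\delta^{\star})$, hence optimality via Propositions \ref{chap4:prop:surrogateknapsack}, \ref{deltastaropt}, and \ref{prop:equivopt}), which the paper leaves to the surrounding discussion and omits from the proof itself.
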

\begin{proof}
    $\delta_L$ and $\delta_U$ can be obtained in $O(n\log n)$ by Proposition \ref{prop:bound}. Hence, $\Delta^{\star}$ can be obtained in $O(n^2)$, where $|\Delta^{\star}|\leq n^2$. $x(q)$ for each $q\in \Delta^{\star}$ can be constructed in $O(n\log n)$. Comparing the objective values corresponding to $x(q)$'s for all $q\in \Delta$ can be performed in $O(n^2)$. Therefore, the NCR can be solved in $O(n^3\log n)$. \qed
\end{proof}
Theorem \ref{thm:poly} states that the non-convex relaxation can be solved in polynomial time through the reformulation. 
In the following section, we show that an approximate solution for the CKP can be derived from the optimal solution for the non-convex relaxation, obtained through Algorithm \ref{chap4:alg:enum}.

\section{Polynomial-time 1/2-approximation algorithm for the CKP based on the non-convex relaxation}\label{chap4:sec:approx}

Here, let $x^{\star}$ be the optimal solution for the non-convex relaxation, obtained through Algorithm \ref{chap4:alg:enum}. By definition, $x^{\star}$ has at most one fractional variable, denoted as $t\in N$. We define $x^{D}$ and $x^{U}$ from $x(\delta^\star)$ in the same manner presented in the proof of Theorem \ref{chap4:prop:intgap}, i.e., $x_j^D=\lfloor x_j^{\star} \rfloor $ for each $j \in N$, while $x^{U}_t=1$ and $x_j^U=0$ for each $j\in N\setminus \{t\}$. Then, it is clear that $x^{D}$ and $x^{U}$ are feasible solutions for the CKP. 

Now, let us define $x^{A}\in\{0,1\}^n$ as follows.
\begin{equation*}
    x^{A}=
    \begin{cases}
        x^{D} & \text{, if } \sum_{j \in N}c_jx^{U}_j \leq \sum_{j \in N}c_jx^{D}_j \\
        x^{U} & \text{, otherwise.}
    \end{cases}
\end{equation*}
\begin{thm}\label{thm:approx}
    $x^{A}$ is a 1/2-approximate solution for the CKP, which can be obtained in $O(n^3\log n)$.
\end{thm}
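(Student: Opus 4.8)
The plan is to establish two things: first, that $x^A$ attains at least half of $z_{OPT}$, and second, that the overall running time is $O(n^3 \log n)$. For the approximation ratio, I would start from the inequality chain already derived in the proof of Theorem~\ref{chap4:prop:intgap}, namely that $z_{NC} = \sum_{j \in N\setminus\{t\}} c_j x_j^{\star} + c_t x_t^{\star} \leq \sum_{j \in N} c_j x_j^D + c_t x_t^U$. Since $x^D$ and $x^U$ are both feasible for the CKP, their objective values are each at most $z_{OPT}$. The key observation is that $x^A$ is defined to be whichever of $x^D, x^U$ has the larger objective value, so $\sum_{j \in N} c_j x_j^A \geq \tfrac{1}{2}\bigl(\sum_{j \in N} c_j x_j^D + \sum_{j \in N} c_j x_j^U\bigr)$.

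The heart of the argument is to connect this average to $z_{NC}$, and hence to $z_{OPT}$. First I would note that $\sum_{j \in N} c_j x_j^U = c_t x_t^U = c_t$ by the definition of $x^U$, and that $x_t^{\star} \leq 1 = x_t^U$. Combining these, the bound $z_{NC} \leq \sum_{j \in N} c_j x_j^D + c_t x_t^U = \sum_{j \in N} c_j x_j^D + \sum_{j \in N} c_j x_j^U$ shows that the sum of the two objective values is at least $z_{NC}$. Therefore
\begin{equation*}
    \sum_{j \in N} c_j x_j^A \geq \frac{1}{2}\left(\sum_{j \in N} c_j x_j^D + \sum_{j \in N} c_j x_j^U\right) \geq \frac{1}{2} z_{NC} \geq \frac{1}{2} z_{OPT},
\end{equation*}
where the final inequality uses that $z_{NC}$ is an upper bound for $z_{OPT}$ (it is a relaxation). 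Since $x^A$ is feasible for the CKP and its value is at least $\tfrac{1}{2} z_{OPT}$, it is a $1/2$-approximate solution.

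For the complexity claim, I would invoke Theorem~\ref{thm:poly}: computing $x^{\star}$ via Algorithm~\ref{chap4:alg:enum} (equivalently, solving the NCR) takes $O(n^3 \log n)$. Given $x^{\star}$, constructing $x^D$ and $x^U$ requires only $O(n)$ work, and comparing their objective values to select $x^A$ is another $O(n)$ operation. Hence the dominant term is the solution of the NCR, and the total running time remains $O(n^3 \log n)$.

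I do not anticipate a serious obstacle here, since the statement is essentially a corollary packaging the relaxation bound (Theorem~\ref{chap4:prop:intgap}) together with the $\max$-selection trick. The one point requiring mild care is verifying that $\sum_{j\in N} c_j x_j^A$ genuinely equals the \emph{larger} of the two candidate values rather than merely one of them; this follows directly from the case definition of $x^A$, but I would state it explicitly so that the step $\sum_{j\in N} c_j x_j^A \geq \tfrac12(\text{sum})$ is transparent. The only subtlety is not conflating $z_{NC}$ with the average: the chain above shows $\tfrac12 z_{NC} \le \tfrac12(\text{sum of the two values})$, which is the direction we need, and then feasibility gives the lower bound against $z_{OPT}$.
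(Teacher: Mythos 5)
Your argument is correct and follows the same strategy the paper intends: round $x^{\star}$ down to obtain $x^{D}$, isolate the (at most one) fractional item to obtain $x^{U}$, observe both are feasible, and output the better of the two. In fact, your write-up is the logically complete form of the argument. The paper's printed proof only records the chain $\sum_{j\in N}c_j x^{A}_j=\max\{\sum_j c_j x^{U}_j,\sum_j c_j x^{D}_j\}\le \sum_j c_j x^{U}_j+\sum_j c_j x^{D}_j\le 2z_{OPT}$, i.e.\ an \emph{upper} bound on the value of $x^{A}$, which by itself does not establish an approximation guarantee for a maximization problem. What is actually needed is exactly the lower bound you give: $\sum_j c_j x^{A}_j\ge \tfrac12\bigl(\sum_j c_j x^{D}_j+\sum_j c_j x^{U}_j\bigr)\ge \tfrac12 z_{NC}\ge \tfrac12 z_{OPT}$, where the middle inequality is the decomposition extracted from the proof of Theorem~\ref{chap4:prop:intgap} (using $\sum_j c_j x^{U}_j=c_t\ge c_t x^{\star}_t$ and $\sum_j c_j x^{D}_j=\sum_{j\ne t}c_j x^{\star}_j$), and the last uses that the non-convex relaxation is a relaxation. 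Your complexity accounting (Theorem~\ref{thm:poly} plus $O(n)$ post-processing) matches the paper's. So: same approach, no gap on your side; your version supplies the direction of inequality that the paper's proof as written leaves implicit.
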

\begin{proof}
    By the feasibility of $x^{D}$ and $x^{U}$ for the CKP, $x^{A}$ is also a feasible solution for the CKP. Recall that $z_{OPT}$ denotes the optimal objective value of the CKP. We can derive the following inequalities from the feasibility of $x^{D}$ and $x^{U}$.
    \begin{equation*}
        \sum_{j \in N}c_j x^{A}_j = \max\left\{\sum_{j \in N}c_jx^{U}_j, \sum_{j \in N}c_jx^{D}_j\right\} \leq \sum_{j \in N}c_jx^{U}_j+ \sum_{j \in N}c_jx^{D}_j \leq 2z_{OPT}
    \end{equation*}
    This result implies that the objective value corresponding to $x^{A}$ is less than or equal to $2z_{OPT}$. Therefore, $x^{A}$ is a 1/2-approximate solution for the CKP. On the other hand, $x(\delta^{\star})$ can be obtained in $O(n^3\log n)$ by Theorem \ref{thm:poly}, and $x^{A}$ can be derived from $x(\delta^{\star})$ in $O(n)$. Hence, this 1/2-approximate solution can be obtained in $O(n^3\log n)$. \qed
\end{proof}
Theorem \ref{thm:approx} directly leads us to devise a polynomial time 1/2-approximation algorithm for the CKP using Algorithm \ref{chap4:alg:enum}. The detail are described in Algorithm \ref{chap4:alg:approx}.
\begin{algorithm}[ht]
\caption{Polynomial time 1/2-approximation algorithm for CKP}
\small
\begin{algorithmic}[1]
\State $x^\star \gets$ Solve the non-convex relaxation using Algorithm \ref{chap4:alg:enum} ;
\State $t\gets$ fractional variable index in $x^\star$ ;
\State $x^{D}_j \gets \lfloor x^{\star}_j\rfloor $ for all $j\in N$ ;
\State $x^{U}_t \gets 1$ and $x^{U}_j \leftarrow 0$ for all $j\in N\setminus\{t\}$ ;
\If{$\sum_{j \in N}c_j x^{U}_j \leq \sum_{j \in N}c_j x^{D}_j $}
\State $x^{A}\gets x^{D}$ ;
\Else
\State $x^{A}\gets x^{U}$ ;
\EndIf
\State \Return $x^{A}$ ;
\end{algorithmic}
\label{chap4:alg:approx}
\end{algorithm}

In summary, we can obtain both quality-guaranteed upper and lower bounds for the CKP in polynomial time from the non-convex relaxation using Algorithm \ref{chap4:alg:enum} and \ref{chap4:alg:approx}. In the following section, we evaluate the efficiency of the proposed algorithms and the quality of the obtained bounds through computational experiments.


\section{Computational test results}\label{chap4:sec:exp}

In this section, we present our computational experiments on the non-convex relaxation and the proposed algorithms. First, we computationally compare three continuous relaxations for the CKP, introduced in Section \ref{chap4:sec:compare}. Subsequently, we evaluate the performance of the proposed approximation algorithm. The result was compared with other solution approaches for the CKP.
All algorithms were implemented using C++ with optimization solvers provided by XPRESS. Our tests were performed using a machine with an Intel Core i7, 3.10GHz CPU, and 16GB RAM.

Three types of the CKP instances were used in our experiments, which were generated in the same manner with \cite{han2016robust} as follows.
\begin{itemize}
    \item SC (strongly correlated): each $a_j$ is an integer value randomly chosen in $[1,100]$ and $c_j=a_j +100$.
    \item IC (inversely correlated): each $c_j$ is an integer value randomly chosen in $[1,100]$ and $a_j=\min\{100,c_j+10\}$.
    \item SS (subset sum): each $a_j$ is an integer value randomly chosen in $[1,100]$ and $c_j=a_j$.
\end{itemize}
The standard deviation $\sigma_j$ was randomly chosen in $[0.1a_j, 0.2a_j]$ for each $j\in N$, and we set $b=\lfloor \sum_{j\in N}a_j \rfloor $. For each combination of instance type (SC, IC, SS), $n\in \{100,500,1000,5000\}$, and $\rho\in\{0.9,0.95\}$, we generated 10 instances, and the average results were reported.


\subsection{Comparison of upper bounds provided by the continuous relaxations}
We computationally compared the upper bounds for the CKP, $z_C$, $z_P$, and $z_{NC}$ provided by the convex, non-convex, and polyhedral relaxations. The convex relaxation was solved using the SOCP solver provided by XPRESS with its default setting. We implemented the cutting-plane algorithm for the polyhedral relaxation. Specifically, we iteratively added the constraints \eqref{modconst} to the convex relaxation, where the convex relaxation with some of the constraints \eqref{modconst} was solved using the SOCP solver provided by XPRESS. The separation problem was solved using the greedy algorithm proposed by \cite{edmond1971}. Finally, the non-convex relaxation was solved using Algorithm \ref{chap4:alg:enum}. The time limit was 600 seconds. 

We present the average results specifically for $\rho=0.9$ in Table \ref{tab:result1}, as there was no significant difference in the outcomes when $\rho=0.95$. Here, ``time(s)'' denotes the average computation time required to solve each continuous relaxation. ``cgap($\%$)'' indicates the average duality gap closed by each continuous relaxation. It is calculated as 
\begin{equation*}
    \text{cgap($\%$)}=\frac{z_{C}-z}{z_{C}-z_{LB}}\times 100,
\end{equation*}
where $z$ denotes the objective value of each continuous relaxation, and $z_{LB}$ represents the best lower bound obtained by solving the ISP using XPRESS with its default setting within a time limit of 1800 seconds. A larger cgap implies a better quality of the upper bound. We marked the cgap value with ``$\star$'' for instances not solved within the time limit. The number of instances not solved is also given in parentheses. The column headed ``\#cut'' presents the average number of cutting planes added when solving the polyhedral relaxation. In addition, we reported $|\Delta|$ and $|\Delta^{\star}|$ investigated when solving the non-convex relaxation. 

\begin{table}[!th]
\setlength{\tabcolsep}{3pt}
  \centering
  \caption{Comparison of continuous relaxations for the CKP ($\rho=0.90$)}
    \label{tab:result1}
    \begin{tabular}{
    >{\centering}p{0.06\textwidth}
    >{\centering}p{0.05\textwidth}
    *{1}{>{\raggedleft}p{0.06\textwidth}}
    p{0.001\textwidth}
    *{1}{>{\raggedright}p{0.07\textwidth}}
    *{1}{>{\raggedright}p{0.06\textwidth}}
    *{1}{>{\raggedright}p{0.09\textwidth}}
    p{0.001\textwidth}
    *{1}{>{\raggedright}p{0.07\textwidth}}
    *{1}{>{\raggedright}p{0.09\textwidth}}
    *{1}{>{\raggedright}p{0.09\textwidth}}
    >{\raggedright\arraybackslash}p{0.07\textwidth}
    }
    \toprule
    \multirow{2}[0]{*}{Type} & \multirow{2}[0]{*}{$n$} & \multicolumn{1}{c}{$z_C$} & & \multicolumn{3}{c}{$z_P$} & & \multicolumn{4}{c}{$z_{NC}$} \\
    \cmidrule{3-3}
    \cmidrule{5-7}
    \cmidrule{9-12}
          &       &  \multicolumn{1}{l}{time(s)} &  & \multicolumn{1}{l}{cgap (\scriptsize{\%})} & \multicolumn{1}{l}{\#cut} &\multicolumn{1}{l}{time(s)} &  & \multicolumn{1}{l}{cgap(\scriptsize{\%})} & $|\Delta|$ & $|\Delta^{\star}|$ & \multicolumn{1}{l}{time(s)} \\
    \midrule
    \multirow{4}[0]{*}{SC} & \multicolumn{1}{r}{100} & 0.011  &       & 47.75  & 6.7   & 0.040  &       & 48.00  & $7.3\times10^2$   & $4.0\times10^1$  & 0.000  \\
          & \multicolumn{1}{r}{500} & 0.021  &       & 61.73  & 7.9   & 0.094  &       & 62.28  & $1.8\times 10^4$ & $5.8\times10^2$   & 0.022  \\
          & \multicolumn{1}{r}{1000} & 0.032  &       & 54.08  & 7.3   & 0.129  &       & 55.02  & $7.3\times 10^4$ & $1.7\times10^3$ & 0.147  \\
          & \multicolumn{1}{r}{5000} & 0.092  &       & 42.68  & 5.2   & 0.403  &       & 51.48  & $1.8\times 10^6$ & $1.8\times10^4$ & 9.583  \\
    \midrule
    \multirow{4}[0]{*}{IC} & \multicolumn{1}{r}{100} & 0.010  &       & 56.85  & 18.4  & 0.116  &       & 57.06  & $3.4\times 10^3$ & $4.7\times10^1$  & 0.000  \\
          & \multicolumn{1}{r}{500} & 0.011  &       & 61.80  & 16.4  & 0.227  &       & 62.13  & $8.5\times10^4$ & $5.2\times10^2$ & 0.017  \\
          & \multicolumn{1}{r}{1000} & 0.019  &       & 54.73  & 15.7  & 0.292  &       & 55.24  & $3.4\times10^5$ & $1.3\times10^3$ & 0.096  \\
          & \multicolumn{1}{r}{5000} & 0.062  &       & 41.19  & 23.2  & 1.748  &       & 43.05  & $8.7\times10^6$ & $1.2\times10^4$ & 4.497  \\
    \midrule
    \multirow{4}[0]{*}{SS} & \multicolumn{1}{r}{100} & 0.008  &       & 51.64  & 125.6 & 2.455  &       & 94.76  & 2     & 1     & 0.000  \\
          & \multicolumn{1}{r}{500} & 0.016  &       & 54.31$^{\star}$  & 562.0   & 362.3(6)  &  & 91.22  & 2     & 1     & 0.001  \\
          & \multicolumn{1}{r}{1000} & 0.021  &       & 56.58$^{\star}$  & 410.4 & 306.6(5)  &   & 91.16  & 2     & 1     & 0.002  \\
          & \multicolumn{1}{r}{5000} & 0.068  &       & 55.95  & 144.9 & 58.28  &       & 90.21  & 2     & 1     & 0.047  \\
    \midrule
    \multicolumn{2}{c}{average} & 0.031 &       & 53.27  &  112.0     & 61.06  &       & 66.80  & $9.2\times10^5$ & $2.8\times10^3$  & 1.201  \\
    \bottomrule
    \end{tabular}%
\end{table}






As discussed in Section \ref{chap4:sec:compare}, the non-convex relaxation consistently improved the duality gap compared to the polyhedral relaxation. The differences in bound improvements between polyhedral and non-convex relaxations were insignificant in SC and IC type instances, whereas the non-convex relaxation required more computation time when $n \geq 1000$. Nevertheless, even when $n=5000$, the non-convex relaxation could provide upper bounds for the CKP, which are tighter than $z_C$ and $z_P$, within 10 seconds. The lower and upper bounds for $\delta^{\star}$ significantly contributed to this result, as indicated in columns $|\Delta|$ and $|\Delta^{*}|$. These bounds substantially reduced the number of reverse points investigated in $\Delta$. Without these bounds, solving the non-convex relaxation took longer computation time, increasing proportionally to the ratio of $|\Delta^{\star}|$ to $|\Delta|$.

On the other hand, the non-convex relaxation outperformed the other continuous relaxations for SS type instances, both in terms of the quality of upper bounds and computation time. In this case, the cutting-plane algorithm for the polyhedral relaxation did not terminate within the time limit for some instances. However, the non-convex relaxation could be solved significantly faster, even compared to the convex relaxation. This result can be explained by the absence of reverse points for SS type instances, as $c_j=a_j$ for each $j\in N$, meaning $Q=\emptyset$.

\subsection{Performance of the proposed approximation algorithm}

We evaluated the performance of the proposed approximation algorithm for the CKP. The results were compared with the robust optimization-based heuristic algorithm proposed by \cite{joung2020robust} and the branch-and-bound algorithm provided by XPRESS, which we refer to as the RO heuristic and BnB, respectively. When we tested the RO heuristic, the parameter associated with the algorithm's step size was set to $K=3$ as recommended in \cite{joung2020robust}. The BnB solves the ISP using XPRESS with its default setting. The time limit for the BnB was set to 1800 seconds.

We also implemented the robust optimization-based approach by \cite{han2016robust} and PTAS by \cite{goyal2010ptas}. For the algorithm by \cite{han2016robust}, the parameter was set to as recommended in their paper. We set $\epsilon =1/6$ for the PTAS to obtain a $1/2$-approximate solution. However, these algorithms failed to find a feasible solution even after 2 hours, as observed by \cite{joung2020robust}. Hence, we did not report the results of these algorithms. 

Table \ref{tab:result2} shows the results of the proposed algorithm, RO heuristic, and the BnB. The column headed ``time (s)'' denotes the computation time for each algorithm. ``gap ($\%$)'' refers to the duality gaps of the obtained objective values, computed as follows.
\begin{equation*}
    \text{gap ($\%$)}=\frac{z_{UB}-\hat{z}}{z_{UB}}\times 100
\end{equation*}
Here, $\hat{z}$ denotes the objective values corresponding to the feasible solutions found in each algorithm. $z_{UB}$ indicates the best upper bound, which is the smaller of the upper bounds obtained from the BnB and non-convex relaxation. A smaller gap implies a better feasible solution. When the average gap value of the BnB result is greater than $0$, we presented the number of instances not solved to optimality in parentheses.

\begin{table}[!th]
\setlength{\tabcolsep}{4pt}
  \centering
  \caption{Comparison of the proposed approximation algorithm and other solution approaches for the CKP ($\rho=0.9$)}
    \label{tab:result2}
    \begin{tabular}{
    >{\centering}p{0.06\textwidth}
    >{\centering}p{0.05\textwidth}
    *{1}{>{\raggedright}p{0.06\textwidth}}
    *{1}{>{\raggedright}p{0.06\textwidth}}
    >{\centering}p{1mm}
    *{1}{>{\raggedright}p{0.06\textwidth}}
    *{1}{>{\raggedright}p{0.06\textwidth}}
    >{\centering}p{1mm}
    *{1}{>{\raggedright}p{0.06\textwidth}}
    *{1}{>{\raggedright\arraybackslash}p{0.12\textwidth}}
    }
    \toprule
    \multirow{2}[0]{*}{Type} & \multirow{2}[0]{*}{$n$} & \multicolumn{2}{c}{Proposed} & & \multicolumn{2}{c}{RO heuristic} & & \multicolumn{2}{c}{BnB} \\
    \cmidrule{3-4}
    \cmidrule{6-7}
    \cmidrule{9-10}
          &       & \multicolumn{1}{l}{gap(\%)} & \multicolumn{1}{l}{time(s)} &  & \multicolumn{1}{l}{gap(\%)} & \multicolumn{1}{l}{time(s)} &  & \multicolumn{1}{l}{gap(\%)} & \multicolumn{1}{l}{time(s)} \\
    \midrule
    \multirow{4}[0]{*}{SC} & \multicolumn{1}{r}{100} & 1.357  & 0.001  &       & 0.013  & 0.080  &       & 0  & 69.564  \\
          & \multicolumn{1}{r}{500} & 0.247  & 0.023  &       & 0.014  & 0.855  &       & 0.016  & 920.08(5)  \\
          & \multicolumn{1}{r}{1000} & 0.103  & 0.149  &       & 0.017  & 3.624  &       & 0.011  & 1462.7(8) \\
          & \multicolumn{1}{r}{5000} & 0.032  & 9.621  &       & 0.005  & 98.12  &       & 0.004  & 1800.0(10) \\
    \midrule
    \multirow{4}[0]{*}{IC} & \multicolumn{1}{r}{100} & 1.360  & 0.001  &       & 0.004  & 0.063  &       & 0  & 40.523  \\
          & \multicolumn{1}{r}{500} & 0.279  & 0.017  &       & 0.007  & 0.452  &       & 0.009  & 563.12(3) \\
          & \multicolumn{1}{r}{1000} & 0.144  & 0.096  &       & 0.006  & 1.477  &       & 0.009  & 1083.0(6) \\
          & \multicolumn{1}{r}{5000} & 0.016  & 4.560  &       & 0.002  & 27.73  &       & 0.004  & 1621.6(9) \\
    \midrule
    \multirow{4}[0]{*}{SS} & \multicolumn{1}{r}{100} & 1.494  & 0.001  &       & 0.031  & 0.059  &       & 0.017  & 366.48(2)  \\
          & \multicolumn{1}{r}{500} & 0.311  & 0.001  &       & 0.021  & 0.344  &       & 0.086  & 1800.0(10) \\
          & \multicolumn{1}{r}{1000} & 0.135  & 0.002  &       & 0.015  & 0.864  &       & 2.198  & 1800.0(10) \\
          & \multicolumn{1}{r}{5000} & 0.037  & 0.047  &       & 0.007  & 15.54  &       & 23.81  & 1800.0(10) \\
    \midrule
    \multicolumn{2}{c}{average} & 0.459  & 1.204  &       & 0.012  & 12.43  &       & 2.180  & 1110.6  \\
    \bottomrule
    \end{tabular}%
\end{table}

As discussed in Section \ref{chap4:sec:approx}, the proposed approximation algorithm guarantees that the integrality gaps of the obtained solutions are less than or equal to $50\%$. However, in our experiments, the observed duality gaps were significantly below $50\%$ for all instances, although the duality gaps are greater than or equal to the integrality gaps by definition. 

The RO heuristic provided near-optimal solutions with gaps of less than $0.05\%$ for all instances within about 100 seconds. Furthermore, for some instances of IC and SS types, the solutions obtained through the RO heuristic outperformed those obtained by the BnB within the time limit.

The solutions obtained through our approximation algorithm were not better than those found in the RO heuristic. However, our algorithm found feasible solutions in significantly less time than the RO heuristic, while the duality gaps of the obtained solutions were not large ($<0.5\%$ on average). Furthermore, our algorithm outperformed the BnB for large SS type instances, both in terms of the quality of solutions and computation time.
Therefore, our algorithm may be more effective than the RO heuristic when solving numerous CKPs, such as sub-problems of large-scale integer programs. Additionally, our algorithm ensures solution quality and does not require tuning parameters, in contrast to the RO heuristic.


\section{Conclusion}\label{chap4:sec:con}
In this study, we introduced a non-convex relaxation for the CKP and compared it with other continuous relaxations. We showed that the non-convex relaxation provides an upper bound for the CKP, at least as tight as those obtained from the other continuous relaxations. We also showed that the non-convex relaxation provides both upper and lower bounds for the CKP in polynomial time, where the quality of the bounds is ensured. The computational test results demonstrated that these upper and lower bounds are tight for the CKP and obtainable within a short time. The proposed approximation algorithm may be utilized as a heuristic algorithm for solving numerous CKPs, such as sub-problems of large-scale integer programs. Furthermore, the upper bounds provided by the non-convex relaxation can be used in the lifting technique for probabilistic cover inequalities for the CKP, proposed by \cite{atamturk2009submodular}. To enhance the effectiveness of our results in these applications, developing more efficient solution approach for the non-convex relaxation is necessary. It is also an interesting research direction to develop solution approaches based on continuous relaxations for the CKP under different probability distributions.





\bibliographystyle{spbasic}

\end{document}